\theoremstyle{plain}
\numberwithin{equation}{section}
\newtheorem{theorem}{Theorem}[section]
\newtheorem{lemma}[theorem]{Lemma}
\newtheorem{corollary}[theorem]{Corollary}
\newtheorem{remark}[theorem]{Remark}
\newcommand{\ii}{\mathsf{i}}
\begin{document}
\title[A complete characterization of Schur stability for trinomials]{The stability region for Schur stable trinomials with general complex coefficients}

\author{Gerardo Barrera}
\address{University of Helsinki, Department of Mathematics and Statistics.
P.O. Box 68, Pietari Kalmin katu 5, FI-00014. Helsinki, Finland.\\
\url{gerardo.barreravargas@helsinki.fi}\\
\url{https://orcid.org/0000-0002-8012-2600}}
\thanks{*Corresponding author: Gerardo Barrera.}
\author{Waldemar Barrera}
\address{Facultad de Matem\'aticas, Universidad Aut\'onoma de Yucat\'an. Anillo Perif\'erico Norte Tablaje CAT 13615, M\'erida, Yucat\'an, M\'exico.\\
\url{bvargas@correo.uady.mx}\\
\url{https://orcid.org/0000-0001-6885-5556}}
\author{Juan Pablo Navarrete}
\address{Facultad de Matem\'aticas, Universidad Aut\'onoma de Yucat\'an. Anillo Perif\'erico Norte Tablaje CAT 13615, M\'erida, Yucat\'an, M\'exico.\\
\url{jp.navarrete@correo.uady.mx}\\
\url{https://orcid.org/0000-0002-3930-4365}}

\subjclass{Primary 12D10, 26C10, 30C15; Secondary 93D23, 11B37}
\keywords{Autoregressive processes; Bohl's Theorem; Characteristic polynomial; Hurwitz polynomial; Linear delay difference equation; Localization; Projective plane; Trinomial equation; Schur polynomial; Stability}

\begin{abstract}
In this paper, we characterize the stability region for trinomials of the form $f(\zeta):=a\zeta ^n + b\zeta ^m +c$, $\zeta\in \mathbb{C}$, where $a$, $b$ and $c$ are non-zero complex numbers and $n,m\in \mathbb{N}$ with $n>m$.
More precisely, we provide necessary and sufficient conditions on the coefficients $a$, $b$ and $c$ in order that all the roots of the trinomial $f$ belongs to the open unit disc in the complex plane. The proof is based on Bohl's Theorem~\cite{Bohl} introduced in 1908.
\end{abstract}
\maketitle
\tableofcontents

\section{\textbf{Introduction}}
\subsection{\textbf{The stability problem}}
The computation and the quantitative location of the roots of a given polynomial are ubiquitous in the applications and it has been produced a vast literature in mathematics and in applied mathematics in recent years.
It is well-known that a linear discrete dynamical system,
for instance linear recurrence equations which are widely used in applied mathematics
and computer science for modeling the future of a process
that depends linearly on a finite string, is asymptotically stable if and only if its corresponding characteristic polynomial has all its roots with complex modulus strictly smaller than one. When a polynomial has all its roots with complex modulus strictly less than one,  the polynomial is called a \textit{Schur stable polynomial}. The notion of Schur stable polynomials is originated in the study of stability of dynamical systems, in particular, in the so-called control theory.

Recall that a general linear recurrence equation with constant coefficients and two delays is defined as follows. 
For a given initial string of complex numbers $\varphi_0,\varphi_1,\ldots,\varphi_{n-1}$, let $(\varphi(t))_{t\geq n}$ be the unique solution of the discrete-time initial value problem
\begin{equation}\label{eq:nuevp}
\left\{
\begin{array}{r@{\;=\;}l}
X(t)&-bX(t-(n-m))-cX(t-n)\quad \textrm{ for }\quad t\in \{n,n+1,n+2,\ldots,\},\\
X(t)&\varphi_t\quad \quad \textrm{ for }\quad t\in \{0,1,\ldots,n-1\},
\end{array}
\right.
\end{equation}
where $b$ and $c$ are non-zero complex numbers and $n,m\in \mathbb{N}$ with $n>m$. The time-shifts  $n-m$ and $n$ in~\eqref{eq:nuevp} are called delays.
It is well-known that the characteristic polynomial 
associated to~\eqref{eq:nuevp} is given by
$T(\zeta):= \zeta ^n + b\zeta ^m +c$, $\zeta\in \mathbb{C}$,
and then the dynamical system~\eqref{eq:nuevp} is asymptotically stable 
 if and only if $T$ is a Schur stable polynomial. For more details about the theory of linear recurrence equations, we refer to the monographies~\cite{Elaydi2005,Jerribook,Mickens}.
  
Discrete-time stable dynamical systems of the form~\eqref{eq:nuevp} with real coefficients have been broadly used in modeling,  
for instance, 
they have been used in Numerical Analysis for the numerical discretization of the so-called linear delay differential equation, in Mathematical Biology for the linearization process of various discrete population growth models, in Financial Mathematics to determine the interest rate, the amortization of a loan and price fluctuations, in Probability for the so-called first step analysis of Markov chains, in Statistics for
the autoregressive linear model,
see~\cite{Botta,Clark,DananElaydi2004,
Elaydi2005,LevinMay,Privault}.
While systems of the form~\eqref{eq:nuevp} with complex coefficients naturally appear in the study of systems of linear recurrence equations such as 
in the linearized model for the
discrete-time Hopfield network with a single delay, or in local stability analysis of some discrete-time population dynamics models, see \cite{Guo,Kaslik,Kipnis2011,Matsunaga2010}.

Since the stability for linear recurrence equations with constant coefficients and two delays is equivalent to the Schur stability of trinomials,
in this paper we parametrize the stability region for trinomials of the form
\begin{equation}\label{def:f}
f(\zeta):=a\zeta ^n + b\zeta ^m +c, \quad \zeta\in \mathbb{C},
\end{equation}
where $a$, $b$ and $c$ are complex numbers and $n,m\in \mathbb{N}$ with $n>m$.
In other words, we provide necessary and sufficient conditions on the complex coefficients $a$, $b$ and $c$ in order that all the roots of $f$ belongs to the open unit disc in the complex plane. 
In that case, we say that~\eqref{def:f} is a \textit{Schur stable trinomial}.
The latter is straightforward when some of the coefficients $a$, $b$ or $c$ are zero. Indeed, we observe that
\begin{equation}\label{eq:degenerados}
\textrm{ $f$ is a Schur stable trinomial if and only if }
\begin{cases}
|c|/|b|<1 &
\textrm{ for $a=0$ and $b\neq 0$,}\\
|c|<1 & \textrm{ for $a=0$ and $b=0$,} \\
|c|/|a|<1 & \textrm{ for $a\neq 0$ and $b=0$,}\\
|b|/|a|<1 & \textrm{ for $a\neq 0$, $b\neq 0$ and $c=0$,}  
\end{cases}
\end{equation}
where $|\cdot|$ denotes the complex modulus.
Therefore, without loss of generality, we always assume that $a$, $b$ and $c$ are non-zero complex numbers.

The celebrated works of P. Ruffini, N. H. Abel and \'E. Galois yield that for $n\geq 5$ generically there is no formula for the roots of~\eqref{def:f} in terms of radicals. We recommend~\cite{Borweinbook,Mardenbook,Prasolovbook} for treatises on polynomials and~\cite{Pan1997} for a brief history of solving polynomials.
Trinomials of the form~\eqref{def:f} with real coefficients have been the subject of numerous qualitative and quantitative studies because of their theoretical importance as well as their applications, see~\cite{Belki} and the list of references therein.
There is a vast literature reporting the study of location of the roots of trinomials when its coefficients are real numbers including series representations of the roots and the shape of the stability region,~\cite{AhnKim,Aziz,Botta,
BrilleslyperSchaubroeck2018,BrilleslyperSchaubroeck2014,
Cermak2015survey,CermakFedorkova2022,Cermak2022,
Cermak2015,Cermak2019,Cheng2009,Danan2004,Darling1924,
Dilcher1992,Eagle,eger,Geleta,Hall,Hanov,Hernane,
Howell,Jain,Kennedy,Kipnis2004,Kuruklis,Marques,
Matsunaga2010,Nekrassoff,Nulton,Papanicolaou1996,
Ren2007,Scoupas,Uahabi,Wang}. In the case of trinomials with complex coefficients, 
lower and uppers bounds for the moduli of their roots have been also obtained 
in~\cite{Cermak2019,daSilvaSri2005,Kennedy,Matsunaga2010,Otto,Vassilev}. 
Trinomials have been also studied from the geometrical, topological and dynamical perspectives, see for 
instance~\cite{Aguirre,BarreraCano,BarreraMaganaNavarrete,Bohl,Fell,Ferrerbook,JimenezMunoz,Kasten,Melman,MunozSeoane,Szabo,Theobald} and the references therein.
Using the Cohn reduction degree method (see~\cite{Cohn1922} or Lemma~42.1 in~\cite{Mardenbook}), the stability region for trinomials with real coefficients is  given in~\cite{Cermak2015}.
Recently, the authors in~\cite{Cermak2022} apply Bolh's Theorem (see Theorem~\ref{th:bohl} below) for trinomials with real coefficients and obtain the results in~\cite{Cermak2015}. 
Nevertheless, to the best of our knowledge, using Bohl's Theorem for the case with complex coefficients has not been fully characterized and it does not follow  straightforwardly from the real case, see Subsection~\ref{subsec:preli} for an explanation of the difficulties on the counting argument in the complex case. 

\subsection{\textbf{Preliminaries}}\label{subsec:preli}
Along this manuscript,  $n>m>0$ are fixed. 
Our main tool is Bohl's Theorem given in~\cite{Bohl}.
Bohl's Theorem gives the number of roots of~\eqref{def:f}
in an open ball of radius $r$ centered at the origin 
 according to whether the non-negative
numbers $|a|r^n$, $|b|r^m$
and $|c|$ are the lengths  of  the sides of some triangle (including degenerate triangles), or not. Bohl's Theorem reads as follows:

\begin{theorem}[Bohl's Theorem for trinomials~\cite{Bohl,Cermak2022}]\label{th:bohl}
\hfill

\noindent
Assume that $n$ and $m$ are coprime numbers.
Let $r>0$ and assume that $|a|r^n$, $|b|r^m$
and $|c|$ are the side lengths of some triangle (it may be degenerate
except for the case below). Let $\omega_1$ and $\omega_2$ be the opposite angles to the sides with lengths $|a|r^n$ and $|b|r^m$, respectively. 
Then the number of roots of~\eqref{def:f} in the open disc of radius $r$, $D_r:=\{z\in \mathbb{C}:|z|<r\}$, is equal to the number of integers in the open interval 
$(P-\omega(r),P+\omega(r))$,
where
\begin{equation}\label{def:PW}
P:=\frac{n(\beta-\gamma+\pi)-m(\alpha-\gamma+\pi)}{2\pi},\quad \quad
\omega(r):=\frac{n\omega_1+m\omega_2}{2\pi},
\end{equation}
and $\alpha$, $\beta$, $\gamma$ are the arguments of $a$, $b$, $c$, respectively.
However, when $|b|r^m=|a|r^n+|c|$, $r^{n-m}>\frac{m |b|}{n|a|}$ and $P+\omega(r)$ is an integer, the number of roots of~\eqref{def:f} in $D_r$ is equal to $m$.

Moreover, when $|a|r^n$, $|b|r^m$
and $|c|$ are not the side lengths of any triangle, then 
\begin{equation}\label{eq:bolhsegunda}
\textrm{The number of roots of $f$ given 
in~\eqref{def:f} in $D_r$}=
\begin{cases}
0 & \textrm{ if }\quad |c|>|a|r^n+|b|r^m,\\
m & \textrm{ if }\quad |b|r^m>|a|r^n+|c|,\\
n & \textrm{ if }\quad |a|r^n>|b|r^m+|c|.
\end{cases}
\end{equation}
\end{theorem}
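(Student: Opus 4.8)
The plan is to reduce the statement to a winding-number computation and then to read off the count from the geometry of three rotating vectors. Writing $\zeta=re^{\ii\theta}$ and $a=|a|e^{\ii\alpha}$, $b=|b|e^{\ii\beta}$, $c=|c|e^{\ii\gamma}$, the argument principle gives that the number of roots of $f$ in $D_r$ equals the winding number about the origin of the closed curve $\theta\mapsto f(re^{\ii\theta})$, provided $f$ has no zero on $\{|\zeta|=r\}$. Since $f(re^{\ii\theta})=V_1(\theta)+V_2(\theta)+V_3$ with $V_1=|a|r^ne^{\ii(n\theta+\alpha)}$, $V_2=|b|r^me^{\ii(m\theta+\beta)}$ and $V_3=|c|e^{\ii\gamma}$ constant, this winding number equals the winding number about the point $-V_3$ of the auxiliary curve $C(\theta):=V_1(\theta)+V_2(\theta)$. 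The curve $C$ is an epitrochoid sweeping out the closed annulus with radii $\big||a|r^n-|b|r^m\big|$ and $|a|r^n+|b|r^m$, and, by factoring out the dominant term and observing that the remaining factor has winding number zero, its winding number about the origin equals $n$ if $|a|r^n>|b|r^m$ and $m$ if $|a|r^n<|b|r^m$. Throughout I assume $\gcd(n,m)=1$ as in the hypothesis; the general case reduces to this one through $\zeta\mapsto\zeta^{\gcd(n,m)}$.

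The non-triangle case is then immediate from this picture, since $-V_3$ lies at distance $|c|$ from the origin: it sits outside the annulus when $|c|>|a|r^n+|b|r^m$ (winding $0$) and inside the inner hole when $|c|<\big||a|r^n-|b|r^m\big|$ (winding $n$ or $m$ according to dominance). Unwinding the two inner cases reproduces exactly the three alternatives $0$, $m$, $n$ of \eqref{eq:bolhsegunda}; equivalently, one applies Rouché's theorem with the dominant monomial.

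The triangle case is the heart of the matter, and here $-V_3$ lies in the open annulus. I would compute the winding number about $-V_3$ as the winding about the origin (namely $n$ or $m$) corrected, up to sign, by the signed number of times $C$ crosses the radial segment from $0$ to $-V_3$. A crossing occurs at those $\theta$ for which $C(\theta)$ is a positive multiple of $-V_3$ of length at most $|c|$; writing $\psi_1=n\theta+\alpha-\gamma$ and $\psi_2=m\theta+\beta-\gamma$, these are the $\theta$ solving $|a|r^n\sin\psi_1+|b|r^m\sin\psi_2=0$ together with a sign and a magnitude constraint. A short law-of-cosines and law-of-sines computation identifies the two solution families with the two orientations of the triangle having sides $|a|r^n,|b|r^m,|c|$: eliminating $\theta$ from the resulting pair of congruences $n\theta+\alpha-\gamma\equiv\pi+\omega_2$ and $m\theta+\beta-\gamma\equiv\pi-\omega_1\pmod{2\pi}$, and from its mirror, shows that a crossing occurring exactly at the endpoint $-V_3$—that is, a root on the circle—happens precisely when $P+\omega(r)\in\mathbb{Z}$ (respectively $P-\omega(r)\in\mathbb{Z}$), which is exactly why the interval in the statement is taken open. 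Tracking the signs of the crossings, the net count assembles into the number of integers strictly between $P-\omega(r)$ and $P+\omega(r)$; the coprimality of $n$ and $m$ is what guarantees, via Bézout, that the two congruences admit a single common solution $\theta$ modulo $2\pi$, so that each integer is produced by exactly one crossing.

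The step I expect to be the main obstacle is the exceptional clause. When $|b|r^m=|a|r^n+|c|$ the triangle degenerates with $\omega_2=\pi$ and $\omega_1=\omega_3=0$, so that $-V_3$ lands on the inner boundary of the annulus, which $C$ merely touches tangentially at its innermost points; the winding number about such a boundary point is genuinely ambiguous, and the crossing count must be resolved by a local analysis. The auxiliary hypothesis $r^{n-m}>\tfrac{m|b|}{n|a|}$ rewrites as $n|a|r^n>m|b|r^m$, that is, it compares the weighted angular speeds $n\,|a|r^n$ and $m\,|b|r^m$ appearing in $\zeta f'(\zeta)=nV_1+mV_2$ at the tangential configuration $\psi_1-\psi_2\equiv\pi\pmod{2\pi}$, and thereby fixes the direction in which $C$ grazes $-V_3$. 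Combined with $P+\omega(r)\in\mathbb{Z}$, which forces the tangency actually to meet the endpoint, this local computation shows that the boundary root must be counted so that the total is exactly $m$. Carrying out this degenerate estimate carefully—and checking that no other spurious tangencies occur—is the delicate part; the remainder is bookkeeping of the crossing signs.
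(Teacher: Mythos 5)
You should first be aware that the paper does not prove Theorem~\ref{th:bohl} at all: it is imported from~\cite{Bohl} and~\cite{Cermak2022}, and the only argument the paper supplies is the reconciliation, in Appendix~\ref{ap:Bohlstatement}, of the exceptional cases (a) and (b) of Theorem~\ref{th:Bohloriginal} with the open-interval counting. So there is no in-paper proof to match your attempt against; your proposal has to stand on its own as a proof of Bohl's theorem, and as such it is an outline of the classically correct strategy (the argument principle applied to $f(re^{\ii\theta})=V_1+V_2+V_3$, reduction to the winding of the epitrochoid $C=V_1+V_2$ about $-V_3$), which is essentially Bohl's own route. The non-triangle case~\eqref{eq:bolhsegunda} is indeed immediate from Rouch\'e exactly as you say.

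The genuine gap is in the triangle case: the sentence ``tracking the signs of the crossings, the net count assembles into the number of integers strictly between $P-\omega(r)$ and $P+\omega(r)$'' is the entire content of the theorem, and you assert it rather than prove it. What has to be shown is that the signed number of transversal crossings of $C$ with the open segment from $0$ to $-V_3$, added to the base winding number ($n$ or $m$), equals $\#\bigl((P-\omega(r),P+\omega(r))\cap\mathbb{Z}\bigr)$; your congruence computation only identifies the \emph{boundary} configurations (roots on $|\zeta|=r$) with $P\pm\omega(r)\in\mathbb{Z}$, which explains why the interval is open but does not produce the count. Three further points are unaddressed. First, your baseline ``winding of $C$ about the origin'' is undefined when $|a|r^n=|b|r^m$, a configuration that lies inside the triangle case (the inner radius of the annulus is then $0$ and $C$ passes through the origin); you need a perturbation or a different base point. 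Second, you treat only the degeneration $|b|r^m=|a|r^n+|c|$, but the degeneration $|a|r^n=|b|r^m+|c|$ (where $\omega_1=\pi$, $2\omega(r)=n$, and the count drops to $n-1$ exactly when the endpoints $P\pm\omega(r)$ are integers --- this is case (b) of Theorem~\ref{th:Bohloriginal}, handled in Appendix~\ref{ap:Bohlstatement}) also places $-V_3$ on the inner boundary circle and requires the same local analysis. Third, for the exceptional clause itself your idea of comparing the weighted speeds $n|a|r^n$ and $m|b|r^m$ at the tangency is the right one, but the conclusion that the total is $m$ rather than $m-1$ (note $2\omega(r)=m$ with both endpoints integers gives only $m-1$ integers in the open interval, so the clause genuinely overrides the counting rule) is exactly the delicate computation you defer; until it is carried out, the statement is not established.
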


\begin{remark}[Exceptional cases]
\hfill

\noindent
The original statement of Bohl's Theorem given in~\cite{Bohl} includes two additional exceptions, see Item~(a) and Item~(b) in Theorem~\ref{th:Bohloriginal} in Appendix~\ref{ap:Bohlstatement}.
Nevertheless, when $n$ and $m$ are coprime numbers, such exceptions have been already included in Theorem~\ref{th:bohl} in the case when the triangle is degenerate. We discuss it in fully detail in the 
Appendix~\ref{ap:Bohlstatement}.
\end{remark}

From now on to the end of this manuscript, we assume that $a=1$ and  in concious abuse of notation we write
\begin{equation}\label{def:fa}
f(\zeta):=\zeta ^n + b\zeta ^m +c, \quad \zeta\in \mathbb{C}.
\end{equation}
We start with the following observation. If $\textsf{gcd}(n,m)=\ell\in \{2,3,\ldots,m\}$, where $\textsf{gcd}$ denotes the greatest common divisor function, we set
$\widetilde{n}:=n/\ell$ and $\widetilde{m}:=m/\ell$, which satisfy 
$\textsf{gcd}(\widetilde{n},\widetilde{m})=1$. Then the change of variable $\zeta\mapsto \zeta^\ell$ yields that 
$f$ is a Schur stable trinomial if and only if 
\begin{equation}
\widetilde{f}(\zeta):=\zeta^{\widetilde{n}} + b\zeta^{\widetilde{m}} +c, \quad \zeta\in \mathbb{C}\quad \textrm{ is a Schur stable trinomial.}
\end{equation}
Therefore, without loss of generality, from here to the end of the manuscript, we assume that  $\textsf{gcd}(n,m)=1$. In addition, 
if~\eqref{def:fa} is a Schur stable trinomial, then the celebrated Vi\`ete's formulas  yield that $|c|<1$.

For any $s\in \mathbb{R}$ we define the \textit{angular flow} $g_s:\mathbb{C}\to \mathbb{C}$ by
\begin{equation}\label{eq:angular}
\begin{split}
g_s(\zeta):&=e^{-\ii ns}f(e^{\ii s}\zeta)\\
&=e^{-\ii ns}(e^{\ii ns}\zeta^n+be^{\ii ms}\zeta^m+c)\\
&=\zeta^n+be^{-\ii (n-m)s}\zeta^m+e^{-\ii ns}c,
\end{split}
\end{equation}
where $\ii$ denotes the unit imaginary number. We point out that the set of roots of $g_s$ are exactly the set of roots of $f$ up to a rotation $e^{-\ii s}$. In other words, for any $s\in \mathbb{R}$, $f$ is a Schur stable trinomial if and only if $g_s$ is a Schur stable trinomial.

The Fundamental Theorem of Algebra yields that
two trinomials $f$ and $g$  have the same roots if and only if $f=\lambda g$ for some $\lambda\in \mathbb{C}^*:=\mathbb{C}\setminus\{0\}$. The preceding equivalence is in fact an equivalence relation. Then
it is natural to identify the subspace of trinomials (modulo the preceding equivalence relation) with the complex projective plane $\mathbb{P}^2_{\mathbb{C}}$.
We recall that
\begin{equation}
\mathbb{P}^2_{\mathbb{K}}:=\{[a:b:c]:\quad\textrm{ where }\quad [a:b:c]:=\{\lambda(a,b,c):\lambda\in \mathbb{K}^*\} \}
\end{equation}
for $\mathbb{K}$ being the field $\mathbb{R}$ or $\mathbb{C}$ and $\mathbb{K}^*:=\mathbb{K}\setminus\{0\}$.
Let 
\begin{equation}\label{def:omega}
\begin{split}
\Omega_n:&=\{[a:b:c]\in \mathbb{P}^2_{\mathbb{C}}:\, a\zeta^n+b\zeta^m+c\,\,\textrm{ is a Schur stable trinomial}\}\\
&=\{[1:b:c]\in \mathbb{P}^2_{\mathbb{C}}:\, \zeta^n+b\zeta^m+c\,\,\textrm{ is a Schur stable trinomial}\}
\end{split}
\end{equation}
and define the continuous projection
$\Pi_n: \Omega_n \to \mathbb{P}^2 _{\mathbb{R}}$
by
\begin{equation}\label{def:pi} 
\Pi_n([1:b:c])=[1:|b|:(-1)^n|c|].
\end{equation}
Then we naturally say that an element $[a:b:c]\in \mathbb{P}^2_{\mathbb{K}}$ is Schur stable over $\mathbb{K}$ if and only if $[a:b:c]\in \Omega_n\cap \mathbb{P}^2 _{\mathbb{K}}$.
In addition, the set $\Omega_n$ is called the stability region.
By Proposition~7.9 in~\cite{BarreraMaganaNavarrete} we have that $\Pi_n(\Omega_n)\subset \Omega_n\cap \mathbb{P}^2 _{\mathbb{R}}$, that is to say, the function $\Pi_n$ maps Schur stable trinomials with complex coefficients to Schur stable trinomials with real coefficients.
In addition, we remark that
the image of $\Pi_n$, i.e., $\Pi_n(\Omega_n)$, can be decomposed into two disjoint sets $``\Delta"$ and $``\Gamma"$ according to whether the
numbers $1$, $|b|$
and $|c|$ are the lengths  of  the sides of some triangle (it may be degenerate) or not, see 
Lemma~\ref{lem:piezas} in Appendix~\ref{ap:tools}.

In the recent paper~\cite{Cermak2022}, it is applied Theorem~\ref{th:bohl} to obtain a characterization of Schur stability 
of~\eqref{def:f} when its coefficients are real numbers. Nevertheless, when the coefficients are complex numbers such characterization is not straightforward. 
In fact, when the coefficients are complex numbers 
we note that the pivot $P$ takes continuous values while when the coefficients are real numbers the pivot $P\in \{\ell/2: \ell\in \mathbb{Z}\}$, see~\eqref{eq:Pcasos} below. 
For a fixed $\omega>0$ and $P$ taking continuous values, one can note that the number of integers in the open interval $(P-\omega,P+\omega)$ may take three different values.

In what follows we explain the main idea in order to apply Theorem~\ref{th:bohl} for the case of complex coefficients.
By Theorem~\ref{th:bohl}, the pivot $P$ associated to~\eqref{def:fa}
is given by 
\begin{equation}\label{eP}
P=\frac{n(\pi+\arg(b)-\arg(c))-m(\pi-\arg(c))}{2\pi}.
\end{equation}
In particular, for the trinomial associated to a point of the form
\[
[1:x:y] \quad \textrm{ with }\quad x\in \mathbb{R}\setminus\{0\}\quad \textrm{ and }\quad y \in \mathbb{R}\setminus\{0\},
\]
all the possible pivots $P$ are given by 
\begin{equation}\label{eq:Pcasos}
P=
\begin{cases}
\frac{n-m}{2} & \textrm{ for }\quad x>0\quad \textrm{ and }\quad y>0,\quad \textrm{i.e., first quadrant},\\
n-\frac{m}{2} & \textrm{ for }\quad x<0\quad \textrm{ and }\quad y>0,
\quad \textrm{i.e., second quadrant},\\
\frac{n}{2} & \textrm{ for }\quad x<0\quad \textrm{ and }\quad y<0,
\quad \textrm{i.e., third quadrant},\\
0 & \textrm{ for }\quad x>0\quad \textrm{ and }\quad y<0,
\quad \textrm{i.e., fourth quadrant}.
\end{cases}
\end{equation}
We observe that $P\in \mathbb{Z}\cup\{\ell+1/2:\ell\in \mathbb{Z}\}$. 
Consider the trinomial associated to the point $[1:b:c] \in \Omega_n$, where $b,c \in \mathbb{C}\setminus\{0\}$ with corresponding pivots $\widetilde{P}$ and $\widetilde{\omega}$ defined 
in~\eqref{def:PW} for $r=1$.
We note that the corresponding pivots (defined 
in~\eqref{def:PW} for $r=1$) for the trinomials associated to the points of the form 
\[
[1:\pm |b|:\pm |c|],
\]
are $P$ and $\omega=\widetilde{\omega}$, where $P$ satisfies~\eqref{eq:Pcasos}.
We stress that the intervals $\widetilde{I}:=(\widetilde{P}-\widetilde{\omega},\widetilde{P}+\widetilde{\omega})$ and $I:=(P-\omega,P+\omega)$ have the same length $2\widetilde{\omega}=2\omega$. However, the may not have the same number of integers on them. 
The cunning choice $\Pi_n$ defined 
in~\eqref{def:pi}
implies that $\widetilde{I}$ and $I$ have the same number of integers,
which allows us to reduce the problem to the real case,
see Proposition~7.9 in~\cite{BarreraMaganaNavarrete}.

\subsection{\textbf{Main results and its consequences}}
In this subsection, we state
the main result of this manuscript and its consequences.

The main result of this manuscript is the following characterization of $\Omega_n$.
\begin{theorem}[Schur stability for trinomials: projective notation]\label{th:main}
\hfill

\noindent
Assume that  $\textsf{gcd}(n,m)=1$.
Let $\Omega_n$ be the set defined in~\eqref{def:omega} and $\Pi_n$ be the projection given in~\eqref{def:pi}.
Then the following is valid.
Every element in $\Omega_n$  can be parametrized in the form
\[
[1: xe^{\ii t} e^{-\ii(n-m) s} : ye^{-\ii ns}],
\]
with
$[1:x:y] \in \Pi_n(\Omega_n)$, 
$0 \leq s\leq 2 \pi$  
and $t$ is a real number satisfying
\begin{enumerate}
\item $|t| \leq \frac{\pi}{n}$ for $[1:x:y] \in \Gamma_{n\mod 2}$,
\item $|t|<\frac{\pi (2 \omega-n+1)}{n}$ for 
$[1:x:y] \in \Delta_{n\mod 2}$,
\end{enumerate}
where 
\begin{equation}\label{eq:defgammadelta}
\begin{split}
\Gamma_{0}&:=\{[1:u:v] \in \mathbb{P}_{\mathbb{R}}^2 \, \,  
| \, \,  0 \leq u,\, \,  0 \leq v, \, \,  u+v<1\},\\
\Delta_{0}&:=\{[1:u:v] \in \mathbb{P}_{\mathbb{R}}^2 \, \,  
| \, \,  0 < u,\, \,  0 < v, \, \, u+v \geq 1 , \, \, 2\omega(u,v) >n-1\},\\
\Gamma_{1}&:=\{[1:u:v] \in \mathbb{P}_{\mathbb{R}}^2 \, \,  
| \, \,  0 \leq u,\, \,  v \leq 0, \, \,  u-v<1\},\\
\Delta_{1}&:=\{[1:u:v] \in \mathbb{P}_{\mathbb{R}}^2 \, \,  
| \, \,  0 < u,\, \,  v < 0, \, \, 
u-v \geq 1 , \, \, 2\omega(u,v) >n-1\},
\end{split}
\end{equation}
and
\begin{equation}\label{eq:defomega1}
\omega(u,v):= \frac{n\, \arccos \left(\frac{u^2+v^2-1}{2u|v|}\right)+m\, \arccos \left( \frac{1-u^2+v^2}{2|v|}\right)}{2\pi}.
\end{equation}

Conversely, every point of this form belongs in $\Omega _n$.
\end{theorem}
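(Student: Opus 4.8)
The plan is to reduce the Schur stability condition to a count of integers in an interval via Bohl's Theorem~\ref{th:bohl} applied with $r=1$, and then to track how that count transforms under the angular flow $g_s$ and under the extra phase $t$. Throughout I would write $\beta=\arg b$, $\gamma=\arg c$, set $x=|b|$ and $y=(-1)^n|c|$ so that $[1:x:y]=\Pi_n([1:b:c])$, and let $P_0:=P([1:x:y])$ and $\omega:=\omega(x,y)$ be as in \eqref{eq:Pcasos} and \eqref{eq:defomega1}. Since $x=|b|>0$ always, only the first quadrant ($n$ even, $P_0=(n-m)/2$) and the fourth quadrant ($n$ odd, $P_0=0$) occur; this is precisely the role of the factor $(-1)^n$ in \eqref{def:pi}, and it forces $P_0$ to be a half-integer when $n$ is even and an integer when $n$ is odd, so that in both parities $P_0$ is the midpoint of $n$ consecutive integers. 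The non-triangle case $|b|+|c|<1$, corresponding to $\Gamma_{n\bmod 2}$, is immediate from \eqref{eq:bolhsegunda}: it yields exactly $n$ roots in $D_1$ for every choice of the phases, so only the triangle case $\Delta_{n\bmod 2}$ requires the refined analysis.

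First I would record the effect of the parametrization on the pivot. A direct substitution of $b=xe^{\ii t}e^{-\ii(n-m)s}$ and $c=ye^{-\ii ns}$ into \eqref{eP} shows that all dependence on $s$ cancels and that $P=P_0+\tfrac{nt}{2\pi}$; equivalently, $g_s$ leaves $P$ invariant while the phase $t$ shifts it by $\tfrac{nt}{2\pi}$. Writing $\delta:=\tfrac{nt}{2\pi}$, the width $\omega$ depends only on the moduli and is therefore unchanged, so by Bohl's Theorem the trinomial is Schur stable if and only if the open interval $(P_0+\delta-\omega,\,P_0+\delta+\omega)$ contains exactly $n$ integers. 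Here the hypothesis $\gcd(n,m)=1$, equivalently $\gcd(n-m,n)=1$, is essential: solving $c=ye^{-\ii ns}$ pins down $s$ modulo $2\pi/n$, giving $n$ admissible values, and as these run through their range the resulting phase $t$ takes $n$ values equally spaced by $2\pi/n$ in $\mathbb{R}/2\pi\mathbb{Z}$, so that all representatives of a fixed trinomial share the same fractional part of $\delta$ and one is always free to pick the representative with $|\delta|=\operatorname{dist}(\delta,\mathbb{Z})$, hence $|\delta|\le\tfrac12$.

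The heart of the argument is then an elementary counting lemma: with $P_0$ the midpoint of $n$ consecutive integers and $\eta:=\omega-\tfrac{n-1}{2}$, the interval $(P_0+\delta-\omega,\,P_0+\delta+\omega)$ contains exactly $n$ integers if and only if $|\delta|<\eta$. Including the $n$ central integers forces $|\delta|<\eta$, while excluding the next two integers, which sit at distance $\tfrac{n+1}{2}$ from $P_0$, requires only $|\delta|\le 1-\eta$; these are compatible with the first binding precisely because $\eta\le\tfrac12$. I would stress that $\eta\le\tfrac12$ is not an assumption but a consequence of the geometry: maximizing $n\omega_1+m\omega_2$ over triangle angles with $n>m$ gives $\omega\le n/2$, hence $2\omega\le n$ and $\eta\le\tfrac12$, with equality only in a fully degenerate limit. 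Translating $|\delta|<\eta$ back through $\delta=\tfrac{nt}{2\pi}$ yields exactly $|t|<\tfrac{\pi(2\omega-n+1)}{n}$, the bound in item (2), and the condition $2\omega>n-1$ defining $\Delta_{n\bmod 2}$ is precisely $\eta>0$, i.e. the requirement that this admissible range be nonempty. For the forward implication one takes a Schur stable trinomial, reads off $\operatorname{dist}(\delta,\mathbb{Z})<\eta$ from Bohl, and selects the representative with $|\delta|<\eta$; for the converse one runs the same equivalence backwards, using that $s$ and $t$ leave the moduli, and hence $\omega$ and the triangle/non-triangle dichotomy, untouched. In the $\Gamma$ case the same selection gives $|\delta|\le\tfrac12$, i.e. $|t|\le\pi/n$, and stability is automatic from \eqref{eq:bolhsegunda}.

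The main obstacle I anticipate is the counting step above together with the verification that the exceptional clause of Theorem~\ref{th:bohl} never interferes. Concretely, one must check that the degenerate configuration $|b|=1+|c|$, which triggers the exceptional count $m$, yields $\omega_1=0$, $\omega_2=\pi$, hence $2\omega=m\le n-1$, so it is excluded from $\Delta_{n\bmod 2}$ by the strict inequality $2\omega>n-1$; by contrast the other admissible degenerate boundary $|b|+|c|=1$ gives $\omega_1=\pi$, $2\omega=n$ and is covered by the ordinary count. Keeping the open and closed endpoints of the interval consistent with the open disc $D_1$ and with the strict versus non-strict inequalities in \eqref{eq:defgammadelta} is the delicate bookkeeping, but the conceptual content is carried entirely by the identity $P=P_0+\tfrac{nt}{2\pi}$, the structural bound $\eta\le\tfrac12$, and the equidistribution of the $n$ admissible phases guaranteed by $\gcd(n,m)=1$.
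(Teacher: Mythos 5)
Your proposal is correct and follows essentially the same route as the paper: reduce to the real projection $\Pi_n$, use the angular flow and $\gcd(n,m)=1$ to normalize to a representative with $|t|\le \pi/n$, and apply Bohl's Theorem by tracking the shifted pivot $P=P_0+\frac{nt}{2\pi}$ through an elementary count of integers in the interval. Your single counting lemma condenses the paper's Lemmas~\ref{basicfactsonintervals1}--\ref{formareducida}, and your direct bound $2\omega\le n$ from $n\omega_1+m\omega_2\le n\pi$ slightly streamlines the paper's indirect argument in Lemma~\ref{lem:piezas}, but the substance is the same.
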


In the sequel,
we reformulate Theorem~\ref{th:main} using polynomial notation, which is convenient for the study of~\eqref{eq:nuevp}.
\begin{theorem}[Schur stability for trinomials: polynomial notation]\label{th:mainrec}
\hfill

\noindent
Assume that  $\textsf{gcd}(n,m)=1$.
The following is valid. 
Every Schur stable trinomial of the form~\eqref{def:fa} can be parametrized in the form
\begin{equation}\label{eq:schur}
\zeta^n+xe^{\ii t} e^{-\ii(n-m) s}\zeta^m + ye^{-\ii ns}
\end{equation}
with
$\zeta^n+x\zeta^m+(-1)^n y$ being a Schur stable trinomial, 
$0 \leq s\leq 2 \pi$  
and $t$ is a real number satisfying
\begin{enumerate}
\item $|t| \leq \frac{\pi}{n}$ for $(x,y)\in \Gamma_{n\mod 2}$,
\item $|t|<\frac{\pi (2 \omega-n+1)}{n}$ for 
$(x,y)\in \Delta_{n\mod 2}$,
\end{enumerate}
where 
\begin{equation}\label{eq:defgammadeltarec}
\begin{split}
\Gamma_{0}&:=\{(u,v) \in \mathbb{R}^2 \, \,  
| \, \,  0 \leq u,\, \,  0 \leq v, \, \,  u+v<1\},\\
\Delta_{0}&:=\{(u,v) \in \mathbb{R}^2 \, \,  
| \, \,  0 < u,\, \,  0 < v, \, \, u+v \geq 1 , \, \, 2\omega(u,v) >n-1\},\\
\Gamma_{1}&:=\{(u,v) \in \mathbb{R}^2 \, \,  
| \, \,  0 \leq u,\, \,  v \leq 0, \, \,  u-v<1\},\\
\Delta_{1}&:=\{(u,v) \in \mathbb{R}^2 \, \,  
| \, \,  0 < u,\, \,  v < 0, \, \, 
u-v \geq 1 , \, \, 2\omega(u,v) >n-1\},
\end{split}
\end{equation}
and
\begin{equation}\label{eq:defomega1rec}
\omega(u,v):= \frac{n\, \arccos \left(\frac{u^2+v^2-1}{2u|v|}\right)+m\, \arccos \left( \frac{1-u^2+v^2}{2|v|}\right)}{2\pi}.
\end{equation}
Conversely, every trinomial of the form~\eqref{eq:schur} is Schur stable.
\end{theorem}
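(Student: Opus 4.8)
The plan is to deduce Theorem~\ref{th:mainrec} directly from Theorem~\ref{th:main} by transporting the statement across the identification of trinomials (modulo the scaling equivalence $f\sim \lambda f$) with points of $\mathbb{P}^2_{\mathbb{C}}$. First I would record the dictionary: the class $[1:b:c]$ corresponds to the trinomial $\zeta^n+b\zeta^m+c$, Schur stability is invariant under $f\mapsto \lambda f$ since the roots are unchanged, and hence by the very definition of $\Omega_n$ in \eqref{def:omega} one has $[1:b:c]\in\Omega_n$ if and only if $\zeta^n+b\zeta^m+c$ is Schur stable. Under this dictionary the projective parametrization $[1:xe^{\ii t}e^{-\ii(n-m)s}:ye^{-\ii ns}]$ of Theorem~\ref{th:main} becomes verbatim the trinomial \eqref{eq:schur}, the cones $\Gamma_{n\bmod 2},\Delta_{n\bmod 2}$ of \eqref{eq:defgammadelta} are read off in the affine chart $a=1$ as the subsets of $\mathbb{R}^2$ in \eqref{eq:defgammadeltarec}, and $\omega$ in \eqref{eq:defomega1rec} is literally \eqref{eq:defomega1}. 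Consequently the constraints (1)--(2) on the free angle $t$ transfer without change.

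The only genuine content beyond this bookkeeping is the translation of the anchor condition ``$[1:x:y]\in \Pi_n(\Omega_n)$'' into the polynomial condition ``$\zeta^n+x\zeta^m+(-1)^n y$ is a Schur stable trinomial.'' Here I would argue as follows. By the definition \eqref{def:pi}, every element of $\Pi_n(\Omega_n)$ has the form $[1:|b|:(-1)^n|c|]$, so its second coordinate is nonnegative and its third coordinate carries the sign $(-1)^n$; thus in the chart $a=1$ a real point $[1:x:y]$ can lie in $\Pi_n(\Omega_n)$ only when $x\ge 0$ and $(-1)^n y\ge 0$, which is exactly the sign pattern imposed by $\Gamma_{n\bmod 2}$ and $\Delta_{n\bmod 2}$. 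I would then invoke Lemma~\ref{lem:piezas}, which decomposes $\Pi_n(\Omega_n)$ precisely into the triangle-free part $\Gamma_{n\bmod 2}$ and the triangle part $\Delta_{n\bmod 2}$, so that membership $[1:x:y]\in\Pi_n(\Omega_n)$ becomes equivalent to $(x,y)\in \Gamma_{n\bmod 2}\cup\Delta_{n\bmod 2}$. Finally, using Proposition~7.9 of \cite{BarreraMaganaNavarrete} (which gives $\Pi_n(\Omega_n)\subset \Omega_n\cap\mathbb{P}^2_{\mathbb{R}}$) together with the relation $(-1)^n y=|c|\ge 0$, I would identify the associated real representative as the Schur stable trinomial $\zeta^n+x\zeta^m+(-1)^n y$, matching the hypothesis of Theorem~\ref{th:mainrec}.

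With the anchor condition translated, the forward direction is immediate: given a Schur stable trinomial of the form \eqref{def:fa}, Theorem~\ref{th:main} produces $s$, $t$ and $[1:x:y]\in\Pi_n(\Omega_n)$ realizing it, and the dictionary rewrites this as the asserted parametrization \eqref{eq:schur} with $\zeta^n+x\zeta^m+(-1)^n y$ Schur stable and $t$ satisfying (1)--(2). For the converse, I would start from any datum $(x,y,s,t)$ as in the statement, use the anchor translation backwards to place $[1:x:y]$ in the correct component $\Gamma_{n\bmod 2}$ or $\Delta_{n\bmod 2}$ of $\Pi_n(\Omega_n)$, and appeal to the converse half of Theorem~\ref{th:main} to conclude that the corresponding projective point lies in $\Omega_n$, i.e. that \eqref{eq:schur} is Schur stable.

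I expect the delicate point to be the anchor translation of the middle paragraph: keeping the factor $(-1)^n$ consistent with the parity-dependent sign conventions built into $\Pi_n$, $\Gamma_{n\bmod 2}$ and $\Delta_{n\bmod 2}$, and verifying that the real representative one extracts is genuinely the Schur stable trinomial $\zeta^n+x\zeta^m+(-1)^n y$ and not a sign variant with a different pivot $P$. Once this identification is pinned down, the remainder is a faithful transcription of Theorem~\ref{th:main} from projective to polynomial notation.
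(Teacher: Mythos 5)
Your overall strategy coincides with the paper's: Theorem~\ref{th:mainrec} is presented there with no separate proof, as a verbatim transcription of Theorem~\ref{th:main} under the identification of $[1:b:c]$ with $\zeta^n+b\zeta^m+c$, and the first and third paragraphs of your proposal carry out exactly that bookkeeping. The problem is the step you yourself single out as delicate, the ``anchor translation'', and your proposed justification of it does not work. Proposition~7.9 of \cite{BarreraMaganaNavarrete} and the definition \eqref{def:pi} tell you that a point $[1:x:y]\in\Pi_n(\Omega_n)$ (so $x=|b|\ge 0$ and $y=(-1)^n|c|$) lies in $\Omega_n$, i.e.\ that the trinomial $\zeta^n+x\zeta^m+y$ is Schur stable. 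That is not the trinomial $\zeta^n+x\zeta^m+(-1)^ny$ appearing in the statement: for $n$ odd the two differ by the sign of the constant term, and this sign change is not cosmetic, because it moves the pivot $P$ of \eqref{eq:Pcasos} (e.g.\ from $(n-m)/2$ to $0$) and can therefore change the parity of the number of integers in Bohl's interval. Your sentence ``together with the relation $(-1)^n y=|c|\ge 0$ I would identify the associated real representative as $\zeta^n+x\zeta^m+(-1)^n y$'' conflates the point $[1:x:y]$ with the point $[1:x:(-1)^n y]$.

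Concretely, the asserted equivalence between ``$[1:x:y]\in\Pi_n(\Omega_n)$'' and ``$(x,y)\in\Gamma_{n\bmod 2}\cup\Delta_{n\bmod 2}$ together with $\zeta^n+x\zeta^m+(-1)^n y$ Schur stable'' requires the parity analysis of Lemma~\ref{parity} and Corollary~\ref{cor:stablereal}, which you do not perform, and it genuinely fails in one parity class: for $n$ odd, $m$ even and $(x,y)\in\Delta_1$ (a nonempty set, e.g.\ for $(n,m)=(3,2)$), the numbers $1$, $x$, $|y|$ are the sides of a triangle, the pivot of $[1:x:(-1)^n y]=[1:x:|y|]$ equals $(n-m)/2\notin\mathbb{Z}$, and Lemma~\ref{parity}(ii) forces an even number of roots in the unit disc, so $\zeta^n+x\zeta^m+(-1)^n y$ is \emph{not} Schur stable even though $[1:x:y]\in\Pi_n(\Omega_n)$. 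Hence the forward direction of your argument breaks down there: Theorem~\ref{th:main} hands you an anchor $[1:x:y]\in\Delta_1$, but you cannot deduce the hypothesis of Theorem~\ref{th:mainrec} as you read it. The transcription closes up only if the anchor condition is interpreted as Schur stability of $\zeta^n+x\zeta^m+y$ (equivalently, by Lemma~\ref{lem:piezas}, simply as $(x,y)\in\Gamma_{n\bmod 2}\cup\Delta_{n\bmod 2}$); you need either to adopt that reading explicitly or to flag the sign discrepancy in the statement, rather than assert an equivalence that cannot be proved in the case $n$ odd, $m$ even.
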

In a concious abuse of notation, after a natural identification, we use the same labels in~\eqref{eq:defgammadelta} and \eqref{eq:defgammadeltarec}.

We continue to rely on the notations and assumption of Theorem~\ref{th:main}.

\begin{remark}[The sppliting of the image]\label{rem:1}
\hfill

\noindent
We stress that $\Delta_{1}=\Delta_{n\mod 2}$ and $\Gamma_{1}=\Gamma_{n\mod 2}$ for an odd number $n$, and 
$\Delta_{0}=\Delta_{n\mod 2}$ and $\Gamma_{0}=\Gamma_{n\mod 2}$ for an even number $n$.
Moreover, Lemma~\ref{lem:piezas} in Appendix~\ref{ap:tools} implies
\begin{equation}
\Pi_n(\Omega_n)=
\begin{cases}
\Delta_0\cup \Gamma_0 & \textrm{ for $n$ being an even number},\\
\Delta_1\cup \Gamma_1 & \textrm{ for $n$ being an odd number}.
\end{cases}
\end{equation}
\end{remark}

\begin{remark}[Existence of a triangle]
\label{rem:2}
\hfill

\noindent
The Law of Cosines implies that for any $[1:x:y]\in \Delta_{n \mod 2}$ there exists a triangle with length sides $1$, $x=|x|$ and $|y|$.
\begin{figure}[!ht]
\includegraphics[scale=0.25]{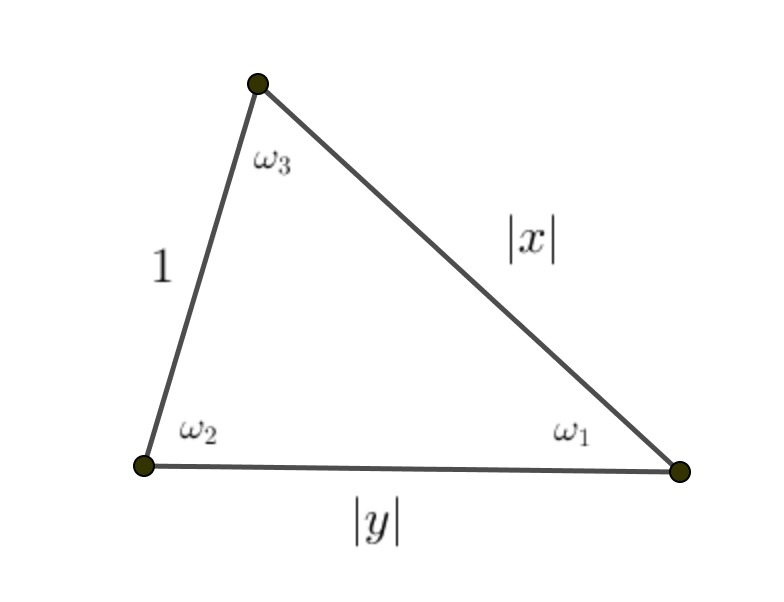}
\caption{Bohl's triangle for the stability region} \label{triangulo}
\end{figure}

\noindent
By the Law of Cosines  we have
\begin{equation}
\omega_1=\arccos\left(\frac{x^2+y^2-1}{2|x||y|}\right),\quad
\omega_2= \arccos\left(\frac{1+y^2-x^2}{2|y|}\right)
\end{equation}
and
\begin{equation}
\omega_3= \arccos\left(\frac{1+x^2-y^2}{2|x|}\right).
\end{equation}
Recall that $\omega_1+\omega_2+\omega_3=\pi$ and that $\arccos(-\theta)=\pi-\arccos(\theta)$ for $-1\leq \theta\leq 1$.
By~\eqref{eq:defomega1} we have that
\begin{equation}\label{ec:uno}
2\omega(x,y)= \frac{n\omega_1+m\omega_2}{\pi}>n-1
\end{equation} is equivalent to 
$n\omega_3+(n-m)\omega_2<\pi$.
We also note that the preceding inequality  is equivalent to
$n(\pi-\omega_1)-m\omega_2<\pi$.
\end{remark}

\begin{remark}[Schur stability for trinomials with real coefficients]
\hfill

\noindent
By Proposition~7.9 in~\cite{BarreraMaganaNavarrete} it follows that
$[1:x:y]\in \Pi_n(\Omega_n)\subset \Omega_n\cap \mathbb{P}^2_\mathbb{R}$ and hence it means that the associated trinomial $\zeta\mapsto\zeta^n+x\zeta^m+y$ is a Schur stable trinomial with $x,y\in \mathbb{R}$. 
We point out that the Schur stability for trinomials with general real coefficients has been established for instance 
in~\cite{CermakFedorkova2022}, Section~3 in~\cite{Cermak2022}, Theorem~2 in~\cite{Kipnis2004} and Theorem~1.3 
in~\cite{Cermak2015}.
\end{remark}

\begin{remark}[Dimension of $\Omega_n$ and the meaning of the parameters]\label{rem:dim}
\hfill

\noindent
We point out that the stability region  $\Omega_n$ is an open set in $\mathbb{C}^2$.
Hence, it can be naturally parametrized by four real parameters $(x,y,s,t)$.
To be more precise, 
\begin{itemize}
\item[(i)]
the parameters $x$ and $y$ are given by the condition $[1:x:y]\in \Pi_n(\Omega_n)$ in Theorem~\ref{th:main},
\item[(ii)]
the parameter $s$ is obtained 
by the angular flow defined in~\eqref{eq:angular} applied to $[1:x:y]\in \Pi_n(\Omega_n)$, that is,
\[
[0,2\pi]\ni s\mapsto
e^{\ii s} \cdot [1:x:y]:=[1: e^{-\ii(n-m)s}x: 
e^{-\ii ns}y],
\]
\item[(iii)] and finally the parameter $t$ satisfying (1) or (2) in Theorem~\ref{th:main} can be interpreted as the permissible variation of the pivot $P(t)$, starting with a trinomial in $\Pi _n(\Omega _n)$ which pivot $P(0)=P$ is given explicitly in~\eqref{eq:Pcasos}, in a way that the open interval given by Bohl's Theorem (Theorem~\ref{th:bohl}) contains exactly $n$ integers. By~\eqref{eP} and~\eqref{eq:Pcasos} we deduce that 
$|t|=\frac{2\pi}{n}|P(t)-P|$.
\end{itemize}
Moreover, it is an open contractible subspace of $\mathbb{R}^4$, in particular, it is path-connected, see Theorem~7.18 
in~\cite{BarreraMaganaNavarrete}.
\end{remark}

In Figure~\ref{pindeomegan} below, we plot $\Pi_n(\Omega_n)$ for the particular cases $(n,m)=(3,1)$; $(n,m)=(3,2)$; and $(n,m)=(4,3)$. We emphasize that the stability region for trinomials with real coefficients, that is $\Omega_n \cap \mathbb{P}^2_{\mathbb{R}}$, can be reconstructed from the projection $\Pi_n(\Omega_n)$ yielding the open set limited by the black curves and lines, see Figure~\ref{pindeomegan}. For instance, for $(n,m)=(4,3)$ we have the following:
\begin{itemize}
\item[(a)] For any $[1:x:y]\in \Delta_0\cup \Gamma_0$ the choice $t=0$ and $s=\pi$ yields
\[[1:-x:y]=[1:xe^{\ii t}e^{-\ii (n-m)s}:ye^{-\ii ns}]\in 
\Omega_n \cap \mathbb{P}^2_{\mathbb{R}}.
\]
\item[(b)] For any $[1:x:y]\in \Gamma_0$ the choice $t=\pi/4$ and $s=\pi/4$ implies
\[[1:x:-y]=[1:xe^{\ii t}e^{-\ii (n-m)s}:ye^{-\ii ns}]\in 
\Omega_n \cap \mathbb{P}^2_{\mathbb{R}}.
\]
\item[(c)] By Item~(b) we know that $[1:x:-y]\in \Omega_n \cap \mathbb{P}^2_{\mathbb{R}}$ whenever $[1:x:y]\in \Gamma_0$.
We note that the choice $t=0$ and $s=\pi$ gives
\[[1:-x:-y]=[1:xe^{\ii t}e^{-\ii (n-m)s}:-ye^{-\ii ns}]
\in \Omega_n \cap \mathbb{P}^2_{\mathbb{R}}
\]
when $[1:x:y]\in \Gamma_0$.
\item[(d)] 
If $[1:x:y]\in \Delta_0$ then $[1:x:-y]\not\in \Omega_n \cap \mathbb{P}^2_{\mathbb{R}}$. 
Recall that $n=4$ and $m=3$.
By contradiction, assume that there exist $|t|<\frac{\pi (2 \omega-n+1)}{n}$ and $0\leq s\leq 2\pi$ such that
\[
[1:x:-y]=[1:xe^{\ii t}e^{-\ii (n-m)s}:ye^{-\ii ns}].
\]
Then we have  $s\in \{\pi/4,(3/4)\pi, (5/4)\pi, (7/4)\pi\}$ and $t-(n-m)s=2\pi \ell$ for some integer $\ell$.
Since  $\omega\leq n/2$, we obtain $|t|=|s+2\pi\ell|<\frac{\pi (2 \omega-n+1)}{n}\leq \frac{\pi}{n}$, that is,
\[
|s+2\pi\ell|<\frac{\pi}{4}\quad  \textrm{ for }\quad  s\in \{\pi/4,(3/4)\pi, (5/4)\pi, (7/4)\pi\}\quad \textrm{ and }\quad \ell\in \mathbb{Z},
\]
which is a contradiction.
Similar reasoning implies that
$[1:-x:-y]\not\in \Omega_n \cap \mathbb{P}^2_{\mathbb{R}}$ whenever $[1:x:y]\in \Delta_0$.
\end{itemize}

\begin{figure}[!ht]
\includegraphics[width=0.90\textwidth]{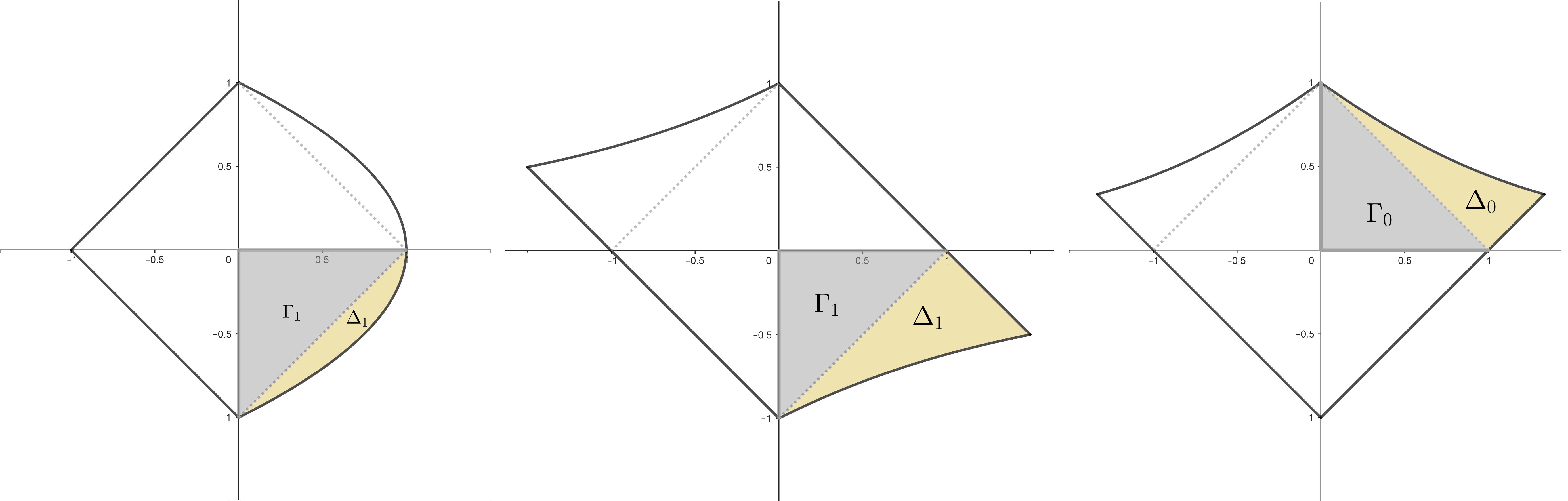}
\caption{The plot of $\Pi_n(\Omega_n)$ for the particular cases $(n,m)=(3,1), (3,2)$ and $(4,3)$, respectively.
The set $\Gamma_j$ is a right triangle that includes the legs sides, however, its hypotenuse (dotted line) belongs to $\Delta_j$. The solid curve bounding 
$\Delta_j$ does not belong to it.} \label{pindeomegan}
\end{figure}

\begin{remark}[Geometric interpretation of $\Gamma_0$, $\Gamma_1$, $\Delta_0$ and $\Delta_1$]\label{rem:geo}
\hfill

\noindent
It is well-known that $\Gamma_j$ and $\Delta_j$ belong to $\Omega_n \cap \mathbb{P}^2_{\mathbb{R}}$,
see Theorem~2 in~\cite{Kipnis2004} and the figure in p.~1712 
in~\cite{Kipnis2004}.
In fact, the (projective) Cohn domain $\mathcal{C}:=\{[1:u:v]\in \mathbb{P}^2_{\mathbb{R}}: |u|+|v|<1\}$ belongs to
$\Omega_n \cap \mathbb{P}^2_{\mathbb{R}}$. It corresponds to the region in $\mathbb{R}^2$, where the absolute value of the coefficients of the trinomial associated to the point $[1:x:y]$ are not the lengths of the sides of any triangle including degenerate triangles.
For $\Delta_j$ we obtain a geometric interpretation of it. To be precise, it is the region in $\mathbb{R}^2$, where the absolute value of the coefficients of the trinomial associated to the point $[1:x:y]$ are the lengths of the sides of some triangle (it may be degenerate). In addition, the dotted line in Figure~\ref{pindeomegan} represents when such triangle is degenerate.
\end{remark}

In the following corollaries  we always assume that  $\textsf{gcd}(n,m)=1$ and the notations of Theorem~\ref{th:main}.
\begin{corollary}[Not stable projection]\label{cor:proj}
\hfill

\noindent
If $[1:x:y]\not \in \Pi_n(\Omega_n)$ then  
$[1: xe^{\ii t} e^{-\ii(n-m) s} : ye^{-\ii ns}]\not \in \Omega_n$ for all $t\in \mathbb{R}$ and $s\in [0,2\pi]$.
In other words, if the projection of $[1:b:c]$ under $\Pi_n$ is not Schur stable, then $[1:b:c]$ is not Schur stable.
\end{corollary}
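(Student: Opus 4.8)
The plan is to prove the contrapositive and to exploit the fact that the projection $\Pi_n$ defined in~\eqref{def:pi} is constant along each orbit of the angular flow. First I would record the elementary but decisive identity
\[
\Pi_n\big([1: xe^{\ii t} e^{-\ii(n-m) s} : ye^{-\ii ns}]\big)=[1:|x|:(-1)^n|y|],
\]
valid for every real $x,y$ and every $s,t$, which is immediate from~\eqref{def:pi} together with $|e^{\ii\theta}|=1$. In particular the $\Pi_n$-image of the whole orbit $\{[1: xe^{\ii t} e^{-\ii(n-m) s} : ye^{-\ii ns}]:s,t\}$ is the single point $[1:|x|:(-1)^n|y|]$.

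Next I would note that in the statement $[1:x:y]$ plays the role of a $\Pi_n$-projection --- consistent with the reformulation ``the projection of $[1:b:c]$'' --- so that its coordinates carry the canonical signs produced by~\eqref{def:pi}, namely $x=|x|\ge 0$ and $(-1)^n|y|=y$. Hence the identity above simplifies to $\Pi_n\big([1: xe^{\ii t} e^{-\ii(n-m) s} : ye^{-\ii ns}]\big)=[1:x:y]$. With this in hand the argument is a one-line contradiction: suppose that for some $t\in\mathbb{R}$ and $s\in[0,2\pi]$ the point $[1: xe^{\ii t} e^{-\ii(n-m) s} : ye^{-\ii ns}]$ lies in $\Omega_n$. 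Since $\Pi_n(\Omega_n)$ is by definition the image of $\Omega_n$ under $\Pi_n$, its $\Pi_n$-image $[1:x:y]$ would then belong to $\Pi_n(\Omega_n)$, contradicting the hypothesis $[1:x:y]\notin\Pi_n(\Omega_n)$. Therefore no choice of $t$ and $s$ can place the orbit point in $\Omega_n$, which is the claim; alternatively one may reach the same contradiction through the forward direction of Theorem~\ref{th:main}, which writes any element of $\Omega_n$ as $[1:x'e^{\ii t'}e^{-\ii(n-m)s'}:y'e^{-\ii ns'}]$ with $[1:x':y']\in\Pi_n(\Omega_n)$ and then forces $[1:x:y]=[1:x':y']$.

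The only point requiring genuine care --- and the step I would flag as the main obstacle --- is the sign bookkeeping underlying the reduction to canonical coordinates. If one were to read $x,y$ as arbitrary real numbers, the statement would be false: for instance with $n$ even a sign flip $x\mapsto -x$ can be absorbed into $e^{\ii t}$ via $xe^{\ii t}=|x|e^{\ii(t+\pi)}$, so a point with $[1:x:y]\notin\Pi_n(\Omega_n)$ but $[1:|x|:y]\in\Pi_n(\Omega_n)$ would still have orbit points in $\Omega_n$. Thus I would make explicit at the outset that $[1:x:y]$ denotes a projection, i.e.\ that $x=|x|$ and $y=(-1)^n|y|$, and verify the compatibility of the two formulations by checking that every $[1:b:c]$ can indeed be written as $[1: xe^{\ii t} e^{-\ii(n-m) s} : ye^{-\ii ns}]$ with $x=|b|$, $y=(-1)^n|c|$ and suitable $s,t$, using $(-1)^n=e^{\ii n\pi}$ to solve for $s$ and then for $t$. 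Once this interpretation is fixed, the remainder of the argument is immediate.
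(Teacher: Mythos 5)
Your proposal is correct. The paper's own proof is the one-liner ``direct consequence of Theorem~\ref{th:main}'', i.e.\ the contrapositive of the forward direction of the parametrization, which is exactly your secondary route. Your primary route is in fact slightly more elementary: it never invokes Theorem~\ref{th:main} at all, only the definition of $\Pi_n(\Omega_n)$ as the image of $\Omega_n$ under $\Pi_n$ together with the observation that $\Pi_n$ is constant along the orbit $\{[1:xe^{\ii t}e^{-\ii(n-m)s}:ye^{-\ii ns}]\}$, since it depends only on the moduli $|x|$ and $|y|$. What the paper's route buys is nothing extra here; what yours buys is independence from the main theorem. Your flagging of the sign convention is also a genuine point and not mere pedantry: since $t$ ranges over all of $\mathbb{R}$, the orbit of $[1:x:y]$ is the full torus $\{[1:u:v]:|u|=|x|,\,|v|=|y|\}$ irrespective of the signs of $x$ and $y$, so the statement is only true under the canonical normalization $x=|x|$, $y=(-1)^n|y|$ implicit in ``the projection of $[1:b:c]$'' and in the standing conventions of Theorem~\ref{th:main}; making this explicit, as you do, is a small but real improvement on the paper's terse treatment.
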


\begin{proof}
The proof is a direct consequence of Theorem~\ref{th:main}.
\end{proof}

\begin{remark}[Trinomials with complex coefficients which are not Schur stable]
\hfill

\noindent
Let $u$ and $v$ be positive numbers.
Assume that the trinomial $\zeta \mapsto \zeta^n+u\zeta^m+(-1)^n v$ is not Schur stable.
Then Corollary~\ref{cor:proj} yields that 
the trinomial $\zeta \mapsto \zeta^n+a\zeta^m+b$ is not Schur stable for
any complex numbers $a$ and $b$ satisfying $|a|=u$
 and $|b|=v$. If in addition, we assume that all roots of $\zeta \mapsto \zeta^n+u\zeta^m+(-1)^n v$ have modulus different from one, then
 the continuity of the roots of polynomials with respect to the coefficients yields that for any complex numbers $a$ and $b$ satisfying $|a|=u$
 and $|b|=v$ there exists $\varepsilon:=\varepsilon(a,b,n,m)>0$ such that the trinomials
$\zeta \mapsto \zeta^n+a_*\zeta^m+ b_*$ are not Schur stable for any complex numbers $a_*$ and $b_*$ satisfying $|a-a_*|<\varepsilon$ and $|b-b_*|<\varepsilon$. 
\end{remark}

\begin{remark}[Schur Stability of the projection $\Pi_n$ does not imply Schur stability]\hfill

\noindent
We stress that the converse statement of Corollary~\ref{cor:proj} is not true. In other words, in general  $\Pi_n[1:b:c] \in \Pi_n(\Omega_n)$ does not imply 
$[1:b:c] \in \Omega_n$.
For instance, let $n=4$ and $m=3$ and assume that $\zeta\mapsto\zeta^n+x\zeta^m+(-1)^n y$ with $x>0$ and $y>0$ is a Schur stable trinomial.
In addition, assume that $[1:x:(-1)^ny]\in \Delta_0$, see Figure~\ref{pindeomegan}.
Then the trinomials of the form
$\zeta \mapsto \zeta^n-x\zeta^m\pm y$ is not Schur stable.

Using a math software, one can verify that the trinomial $\zeta\mapsto \zeta^{11}-e^{\ii\cdot 0.6}\zeta^{10}-0.05e^{\ii\cdot 0.6}$ is not Schur stable even its corresponding projection
$\zeta\mapsto \zeta^{11}+\zeta^{10}-0.05$ is a Schur stable trinomial.
\end{remark}

\begin{remark}[Schur stability for real coefficients: characterization]\label{rem:cara}\hfill

\noindent
By Theorem~1.3 in~\cite{Cermak2015}
it follows that $[1:x:y] \in \Omega_n \cap \mathbb{P}^2_{\mathbb{R}}$ if and only if one of the following conditions is valid:
\begin{itemize}
\item[($C_1$)] $|x|+|y|<1$,
\item[($C_2$)] $|x|+|y|\geq 1$, $|x|-1<|y|<1$, $(-1)^{m} x^n y^{n-m}<0$ and
\begin{equation}\label{ec:cuatro}
\frac{n\, \arccos \left(\frac{1+x^2-y^2}{2|x|}\right)+(n-m)\, \arccos \left( \frac{1-x^2+y^2}{2|y|}\right)}{\pi}<1.
\end{equation}
\end{itemize}
Moreover, Corollary~2 in~\cite{Cermak2022} gives the following characterization of  $\Omega_n \cap \mathbb{P}^2_{\mathbb{R}}$. A point
$[1:x:y] \in \Omega_n \cap \mathbb{P}^2_{\mathbb{R}}$ if and only if one of the following conditions is valid: ($C_1$) given above or
\begin{itemize}
\item[($C^{\prime}_2$)] $|x|+|y|\geq 1$, $(-1)^{m} x^n y^{n-m}<0$ and
\begin{equation}\label{ec:cinco}
\frac{n\, \arccos \left(\frac{1-x^2-y^2}{2|x||y|}\right)-m\, \arccos \left( \frac{1-x^2+y^2}{2|y|}\right)}{\pi}<1.
\end{equation} 
\end{itemize}
We stress that~\eqref{ec:cuatro} 
and~\eqref{ec:cinco} are equivalent,
and the they are also equivalent to the condition $2\omega(x,y)>n-1$, where $\omega(x,y)$ is defined in~\eqref{eq:defomega1},
see Remark~\ref{rem:2}. 
\end{remark}

In the sequel, we show that Theorem~\ref{th:main} yields, in particular, a characterization of $\Omega_n \cap \mathbb{P}^2_{\mathbb{R}}$, i.e., the stability region for trinomials with general real coefficients.
We recall that
$\Pi_n([1:a:b])=[1:x: y]$, where $x=|a|>0$ and $y=(-1)^n|b|$, 
see~\eqref{def:pi},
and 
\begin{equation}
\Pi_n(\Omega_n)=
\begin{cases}
\Delta_0\cup \Gamma_0 & \textrm{ for $n$ being an even number},\\
\Delta_1\cup \Gamma_1 & \textrm{ for $n$ being an odd number},
\end{cases}
\end{equation}
see Remark~\ref{rem:1}.

\begin{corollary}[Schur stability for real coefficients]\label{cor:stablereal}
\hfill

\noindent
For any integer $n\geq 2$ it follows that 
$[1:x:y]\in \Gamma_{n\mod 2}$ if and only if $[1:x: -y],[1:-x:y], [1:-x:-y]\in \mathcal{C}:=\{[1:u:v]\in \mathbb{P}^2_{\mathbb{R}}: |u|+|v|<1\}$ with $x>0$ and $(-1)^n y>0$.
In addition,
\begin{itemize}
\item[(i)] for $n$ being an even positive integer it follows that 
 $[1:x:y]\in \Delta_0$  if and only if $[1:-x:y]\in (\Omega_n\cap\mathbb{P}^2_\mathbb{R})\setminus\mathcal{C}$ and $x>0$.
\item[(ii)] for $n\geq 3$ being an odd positive number and $m$ is an even positive number. Then  $[1:x:y]\in \Delta_1$  if and only if $[1:-x:-y]\in (\Omega_n\cap\mathbb{P}^2_\mathbb{R})\setminus\mathcal{C}$, $x>0$ and $y<0$.
\item[(iii)] for $n\geq 3$ being an odd positive number and $m$ is an odd positive number. Then $[1:x:y]\in \Delta_1$  if and only if $[1:x:-y]\in (\Omega_n\cap\mathbb{P}^2_\mathbb{R})\setminus \mathcal{C}$ and $y<0$.
\end{itemize}
\end{corollary}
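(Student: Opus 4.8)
The plan is to deduce everything from the real--coefficient characterization recalled in Remark~\ref{rem:cara}, from the invariance of the Cohn domain $\mathcal{C}=\{[1:u:v]\in\mathbb{P}^2_{\mathbb{R}}:|u|+|v|<1\}$ under sign changes of the coordinates, and from an elementary parity analysis of the sign condition $(-1)^{m}x^{n}y^{n-m}<0$. Throughout I would work in the regime $x>0$ and $(-1)^{n}y>0$, which is exactly the regime produced by $\Pi_n$, since the standing hypotheses $b,c\neq0$ force $x=|b|>0$ and $(-1)^{n}y=|c|>0$. For the $\Gamma$ statement, observe that $\mathcal{C}$ is invariant under each of the reflections $u\mapsto-u$ and $v\mapsto-v$; hence the three conditions $[1:x:-y],[1:-x:y],[1:-x:-y]\in\mathcal{C}$ are all equivalent to the single inequality $|x|+|y|<1$. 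Under $x>0$ and $(-1)^{n}y>0$ one has $|x|+|y|=x+y$ when $n$ is even and $|x|+|y|=x-y$ when $n$ is odd, so $|x|+|y|<1$ is precisely the defining inequality of $\Gamma_0$ (resp.\ $\Gamma_1$) in~\eqref{eq:defgammadelta}; this settles the $\Gamma$ equivalence.

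For the three items on $\Delta_{n\bmod 2}$, I would first record that, by Remark~\ref{rem:cara} together with the fact that conditions $(C_1)$ and $(C_2')$ describe disjoint loci (the former asks $|x|+|y|<1$, the latter $|x|+|y|\ge1$), the set $(\Omega_n\cap\mathbb{P}^2_{\mathbb{R}})\setminus\mathcal{C}$ is exactly the locus where $(C_2')$ holds, namely
\[
(\Omega_n\cap\mathbb{P}^2_{\mathbb{R}})\setminus\mathcal{C}=\{[1:x:y]:\ |x|+|y|\ge1,\ (-1)^{m}x^{n}y^{n-m}<0,\ 2\omega(x,y)>n-1\}.
\]
Two of these three conditions, $|x|+|y|\ge1$ and the inequality $2\omega(x,y)>n-1$ in the form~\eqref{ec:cinco}, depend only on $|x|$ and $|y|$ and are therefore invariant under the coordinate reflections appearing in items (i)--(iii); only the sign condition is sensitive to the reflections, and its analysis is where the parity bookkeeping lives.

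The core computation is then to substitute the reflected point into the sign condition and read off the forced signs of $x$ and $y$. Since $\textsf{gcd}(n,m)=1$, the integers $n,m$ are never both even, which pins down the parities in each item. In item (i), $n$ even forces $m$ odd and $n-m$ odd, so $(-1)^{m}(-x)^{n}y^{n-m}$ has the sign of $-y$; demanding negativity yields $y>0$, and together with $x>0$, $|x|+|y|=x+y\ge1$ and $2\omega>n-1$ this is exactly $\Delta_0$. In item (ii), $n$ odd and $m$ even give $n-m$ odd, and $(-1)^{m}(-x)^{n}(-y)^{n-m}$ reduces to $x^{n}y^{n-m}$, whose sign is $\operatorname{sign}(x)\operatorname{sign}(y)$; negativity is compatible with the prescribed $x>0$, $y<0$, and $|x|+|y|=x-y\ge1$ then gives $\Delta_1$. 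In item (iii), $n$ odd and $m$ odd give $n-m$ even, and $(-1)^{m}x^{n}(-y)^{n-m}$ has the sign of $-x$, so negativity forces $x>0$; with the prescribed $y<0$ one again recovers $\Delta_1$. In each case the equivalence runs in both directions because the sign condition is equivalent to the prescribed signs of $x,y$, while the two remaining conditions transfer verbatim.

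The main obstacle I anticipate is bookkeeping rather than conceptual: one must keep the boundary conventions consistent (the legs of $\Gamma_j$ are included while its hypotenuse belongs to $\Delta_j$, and the hypotheses $b,c\neq0$ legitimately restrict attention to $x>0$, $(-1)^{n}y>0$), and one must justify carefully that $2\omega(x,y)>n-1$ is genuinely even in each coordinate. For the latter I would rely on the equivalent form~\eqref{ec:cinco}, whose two $\arccos$ arguments $\tfrac{1-x^{2}-y^{2}}{2|x||y|}$ and $\tfrac{1-x^{2}+y^{2}}{2|y|}$ are manifestly functions of $|x|$ and $|y|$ only, so that replacing $x$ by $-x$ or $y$ by $-y$ leaves the inequality unchanged; this sidesteps the apparent sign ambiguity introduced by the bare $u$ in the denominator of~\eqref{eq:defomega1}. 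Once these invariances are in place, the statement reduces to the finite check of the four regimes above.
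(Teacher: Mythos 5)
Your argument is correct, but it takes a genuinely different route from the paper's. You deduce everything from the real-coefficient characterization recorded in Remark~\ref{rem:cara} (conditions $(C_1)$ and $(C_2')$, quoted from the literature): since the Cohn inequality $|u|+|v|<1$ and the angle condition in the form~\eqref{ec:cinco} depend only on $|x|$ and $|y|$, they are reflection-invariant, and the whole content of items (i)--(iii) collapses to the parity bookkeeping of the sign condition $(-1)^m x^n y^{n-m}<0$, which you carry out correctly in all three regimes (using $\gcd(n,m)=1$ to pin down the parities of $m$ and $n-m$); the $\Gamma$ statement is, as you say, immediate from reflection-invariance of $\mathcal{C}$. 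The paper instead derives the corollary internally from Theorem~\ref{th:main}: the positive inclusions come from exhibiting explicit parameters, e.g.\ $(t,s)=(0,\pi)$ or $t=\pi/n$, $s=k\pi/n$ with $k$ chosen via $\gcd(n-m,2n)=1$, and the exclusions $[1:x:-y]\notin\Omega_n$ (resp.\ $[1:-x:y]$, $[1:-x:-y]$) for points of $\Delta_{n\bmod 2}$ are obtained by computing the pivot from~\eqref{eq:Pcasos} and combining Bohl's Theorem with the parity Lemma~\ref{parity}: the counting interval would have to contain exactly $n$ integers, but its centre forces the wrong parity. What your route buys is brevity and a purely elementary sign computation; what it costs is that it rests on the externally quoted characterization rather than on the paper's main theorem, whereas the corollary is explicitly advertised as showing that Theorem~\ref{th:main} recovers the real stability region, so the paper's proof carries information yours does not. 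Both arguments share the same harmless boundary caveat that points of $\Gamma_j$ with $x=0$ are excluded by the standing assumption $b\neq 0$, which you correctly flag.
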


\begin{proof}
The proof is given in Subsection~\ref{sub:proofcorstable} in Appendix~\ref{ap:tools}.
\end{proof}

As we have already pointed out in Remark~\ref{rem:geo} or in Corollary~\ref{cor:stablereal}, the Cohn domain $\mathcal{C}\subset \Omega_n \cap \mathbb{P}^2_{\mathbb{R}}$. However, its boundary
$\partial \mathcal{C}:=\{[1:u:v]\in \mathbb{P}^2_{\mathbb{R}}: |u|+|v|=1\}$ does not belong in $\Omega_n \cap \mathbb{P}^2_{\mathbb{R}}$. By Remark~\ref{rem:cara} or Corollary~\ref{cor:stablereal} we have that 
$\{[1:u:v]\in \mathbb{P}^2_{\mathbb{R}}: |u|+|v|=1,\,(-1)^{m} u^n v^{n-m}<0\}$ belongs to $\Omega_n \cap \mathbb{P}^2_{\mathbb{R}}$.

The following corollary yields that the trinomial $\zeta\mapsto\zeta^n+b\zeta^m+c$ with $1\leq m< n-1$, and $b,c\in \mathbb{C}$ satisfying $|b|=1$ and $0<|c|<1$ is never Schur stable.

\begin{corollary}[Trinomials with two unimodular coefficients]\label{cor:uno}
\hfill

\noindent
Assume that $|b|=1$ and 
$0<|c| < 1$. If $1\leq m<n-1$, then 
$\Pi _n([1:b:c])=[1: 1:(-1)^n|c|]\not\in \Omega_n$. Hence $[1:b:c] \notin \Omega _n$. 
\end{corollary}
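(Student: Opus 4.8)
The plan is to collapse the whole statement to one explicit evaluation of $\omega$. By the definition~\eqref{def:pi} of $\Pi_n$ together with $|b|=1$ we have $\Pi_n([1:b:c])=[1:1:(-1)^n|c|]$, a \emph{real} point that is moreover fixed by $\Pi_n$ (since $\Pi_n[1:1:(-1)^n|c|]=[1:1:(-1)^n|c|]$). Consequently, if this point belonged to $\Omega_n$ it would, after applying $\Pi_n$, belong to $\Pi_n(\Omega_n)$; so it suffices to show $[1:1:(-1)^n|c|]\notin\Pi_n(\Omega_n)$, which simultaneously gives $\Pi_n([1:b:c])\notin\Omega_n$. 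Once that is established, the ``Hence $[1:b:c]\notin\Omega_n$'' is precisely Corollary~\ref{cor:proj} applied to $[1:x:y]=[1:1:(-1)^n|c|]$. By Remark~\ref{rem:1} we have $\Pi_n(\Omega_n)=\Gamma_{n\bmod 2}\cup\Delta_{n\bmod 2}$, so I would rule out membership in each piece separately.

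Write $\rho:=|c|\in(0,1)$, $u=1$ and $v=(-1)^n\rho$, so $|v|=\rho$. Exclusion from $\Gamma_{n\bmod 2}$ is immediate from~\eqref{eq:defgammadelta}: for $n$ even the defining inequality reads $u+v=1+\rho<1$, and for $n$ odd it reads $u-v=1+\rho<1$, both impossible because $\rho>0$. Note that the degenerate-triangle side condition ($u+v\ge 1$ for $n$ even, $u-v\ge 1$ for $n$ odd) \emph{does} hold here, so the point genuinely sits in the regime where the $\omega$-condition of $\Delta_{n\bmod 2}$ must be checked.

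The heart of the argument is the exclusion from $\Delta_{n\bmod 2}$, and here the key observation is a fortunate collapse: when $u=1$ both arccosine arguments in~\eqref{eq:defomega1} reduce to the same value, since $\tfrac{u^2+v^2-1}{2u|v|}=\tfrac{|v|}{2}$ and $\tfrac{1-u^2+v^2}{2|v|}=\tfrac{|v|}{2}$. Therefore
\begin{equation}
2\omega(1,v)=\frac{n+m}{\pi}\,\arccos\!\left(\frac{\rho}{2}\right).
\end{equation}
Because $0<\rho/2<\tfrac12$ we have $\arccos(\rho/2)<\pi/2$, hence $2\omega(1,v)<\frac{n+m}{2}$. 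The hypothesis $m<n-1$ gives $m\le n-2$, so $\frac{n+m}{2}\le n-1$ and thus $2\omega(1,v)<n-1$. This violates the condition $2\omega(u,v)>n-1$ in the definition of $\Delta_{n\bmod 2}$, so the point lies in neither $\Gamma_{n\bmod 2}$ nor $\Delta_{n\bmod 2}$, i.e. $[1:1:(-1)^n|c|]\notin\Pi_n(\Omega_n)$.

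The only delicate point is the sharpness of the last inequality, which is exactly where the strict hypothesis $m<n-1$ is used: for $m=n-1$ one would only get $\frac{n+m}{2}=n-\tfrac12>n-1$, so the estimate would no longer force exclusion from $\Delta_{n\bmod 2}$, in agreement with the fact that Schur stable trinomials with a unimodular middle coefficient do exist in the boundary case $m=n-1$. Assembling the pieces, $\Pi_n([1:b:c])\notin\Omega_n$, and Corollary~\ref{cor:proj} yields $[1:b:c]\notin\Omega_n$, completing the proof.
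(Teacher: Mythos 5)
Your proposal is correct and follows essentially the same route as the paper: project to $[1:1:(-1)^n|c|]$, observe that both arccosine arguments in $\omega(1,v)$ collapse to $|c|/2$ so that $2\omega=\tfrac{n+m}{\pi}\arccos(|c|/2)<\tfrac{n+m}{2}\le n-1$ when $m\le n-2$, which excludes the point from $\Delta_{n\bmod 2}$ (and trivially from $\Gamma_{n\bmod 2}$), and then invoke Corollary~\ref{cor:proj}. The only cosmetic difference is that the paper phrases the final step contrapositively ($2\omega>n-1$ forces $m=n-1$) while you derive $2\omega<n-1$ directly, and you spell out the $\Gamma$-exclusion that the paper leaves implicit.
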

\begin{proof}
Assume that $0<|c|<1$.
We note that 
\begin{equation}\label{eq:crest}
\frac{\pi}{3}\leq \arccos\left(\frac{|c|}{2}\right)<\frac{\pi}{2}.
\end{equation}
We start with the case when $n$ is an even number.
The case when $n$ is an odd number is analogous.
By~\eqref{def:pi} we have that $\Pi_n([1:b:c])=[1:1,|c|]$. Now, we verify when $[1:1:|c|]\in \Delta_0$, where $\Delta_0$ is defined in~\eqref{eq:defgammadelta}.
For short, we write $\omega:=\omega(1,|c|)$.
Observe that $2\omega>n-1$ if and only if
\begin{equation}
2\omega=(n+m)\frac{\arccos\left(\frac{|c|}{2}\right)}{\pi}>n-1.
\end{equation}
The preceding inequality together with~\eqref{eq:crest} imply
\[
(n+m)\frac{1}{2}>n-1
\]
yielding $m=n-1$. By Corollary~\ref{cor:proj} we conclude the second part of the statement. 
\end{proof}

Corollary~\ref{cor:uno} implies that the trinomial associated to 
$[1:\pm 1:c]$ is stable only when $m=n-1$.
In Theorem~2 in~\cite{Kuruklis}, the author studies 
the Schur stability for the trinomial associated to $[1:-1:c]$ with $c\in \mathbb{R}$ and $m=n-1$.
The following corollary yields the Schur stability for the trinomial associated to $[1:1:c]$.

\begin{corollary}[Theorem~2 in~\cite{Kuruklis}: real case]\label{cor:dos}
\hfill

\noindent
Assume that $b=1$, $0<c<1$ and $m=n-1$.
\begin{enumerate}
\item[(i)] If $n$ is an odd number then $[1:1:c]\not\in \Omega_n$. Moreover, $[1:1:-c]\in  \Omega_n$ if and only if 
\begin{equation}\label{eq:upper}
\frac{(n-1)\pi}{2n-1} < \arccos \left(\frac{c}{2}\right) < \frac{\pi}{2}.
\end{equation}
\item[(ii)] If $n$ is an even number then
$[1: 1 :-c]\not\in \Omega_n$. In addition, 
$[1: 1 :c]\in \Omega_n$  if and only if
$$\frac{(n-1)\pi}{2n-1} < \arccos \left(\frac{c}{2}\right) < \frac{\pi}{2}.$$ 
\end{enumerate} 
\end{corollary}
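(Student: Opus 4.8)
The plan is to reduce both claims to the real-coefficient characterization of Remark~\ref{rem:cara}, in the sharp form~($C_2'$) together with the equivalence between~\eqref{ec:cinco} and $2\omega(x,y)>n-1$ recorded there, applied to the real points $[1:1:c]$ and $[1:1:-c]$, for which $|x|=1$, $|y|=c\in(0,1)$ and $m=n-1$. First I would rule out~($C_1$): since $|x|+|y|=1+c>1$, the Cohn condition fails, so membership in $\Omega_n$ is governed entirely by~($C_2'$), whose leading requirement $|x|+|y|\geq 1$ holds trivially here.

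The decisive part is the sign condition $(-1)^{m}x^{n}y^{n-m}<0$, which for $m=n-1$ reads $(-1)^{n-1}x^{n}y<0$. I would examine this by the parity of $n$ and the sign of $y$, using $x^{n}=1$ throughout. For $n$ odd one has $(-1)^{n-1}=1$, so $[1:1:c]$ yields the value $c>0$ and fails the sign condition (hence lies outside $\Omega_n$), whereas $[1:1:-c]$ yields $-c<0$ and satisfies it. For $n$ even the signs reverse: $[1:1:-c]$ fails and $[1:1:c]$ passes. This immediately gives the non-stability assertions in~(i) and~(ii).

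For the surviving point it then remains to evaluate $2\omega(x,y)>n-1$, and here I would reuse the computation already appearing in the proof of Corollary~\ref{cor:uno}: since $u=1$ forces $u^{2}-1=0$, both arccosine arguments in~\eqref{eq:defomega1} collapse to $\tfrac{|v|}{2}$, whence $2\omega(1,c)=\frac{(n+m)\arccos(c/2)}{\pi}=\frac{(2n-1)\arccos(c/2)}{\pi}$ after setting $m=n-1$. Thus $2\omega(1,c)>n-1$ is equivalent to $\arccos(c/2)>\frac{(n-1)\pi}{2n-1}$, while the companion bound $\arccos(c/2)<\tfrac{\pi}{2}$ is forced by $c>0$; together these reproduce exactly the two-sided inequality~\eqref{eq:upper} of~(i), and the identical computation settles the inequality asserted in~(ii). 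The only point requiring genuine care is the parity bookkeeping of the sign condition across the four sub-cases, since the evaluation of $\omega$ is immediate from the $u=1$ simplification and everything else is pure substitution.
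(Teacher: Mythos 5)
Your proof is correct, but it follows a genuinely different route from the paper's. You reduce everything to the known real-coefficient characterization quoted in Remark~\ref{rem:cara} (conditions ($C_1$)/($C_2'$) from the literature, together with the recorded equivalence of~\eqref{ec:cinco} with $2\omega(x,y)>n-1$): the Cohn condition fails since $1+c>1$, the sign condition $(-1)^{n-1}y<0$ sorts the four sub-cases by parity exactly as you say, and the $u=1$ collapse of both arccosine arguments to $c/2$ gives $2\omega(1,c)=\tfrac{(2n-1)\arccos(c/2)}{\pi}$, yielding~\eqref{eq:upper}. The paper instead stays inside its own machinery: for the non-stability assertions it observes from~\eqref{eq:Pcasos} that the pivot is $P=\tfrac{n-m}{2}=\tfrac12$ (resp.\ an inadmissible half-integer/integer value in the even case), so by the parity argument of Lemma~\ref{parity} the Bohl interval contains an even (resp.\ odd) number of integers and can never contain exactly $n$; for the positive assertion it invokes Theorem~\ref{th:main} and checks membership in $\Delta_{1}$ (resp.\ $\Delta_0$) via the same computation of $2\omega$ that you perform. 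Your argument buys brevity and uniformity (one criterion handles all four sub-cases), at the cost of resting on the externally cited characterization rather than on the paper's main theorem; the paper's version is self-contained and illustrates how Theorem~\ref{th:main} and the pivot/parity bookkeeping are actually used. Both are valid, and the decisive computation of $\omega$ is identical.
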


As a consequence of Corollary~\ref{cor:uno} we have that  $n=2$ and $c\in (-1,1)$ are neccesary conditions in order that the Lambert trinomial (see~\cite{Wang}) $L_{n}(\zeta)=\zeta^n+\zeta+c$, $\zeta\in \mathbb{C}$ with $c\in \mathbb{R}$ is Schur stable. However, such conditions are not sufficient. Indeed, by Corollary~\ref{cor:dos} we have that  $L_2$ is a Schur stable trinomial if and only if  $0<c<1$.

\begin{proof}[Proof of Corollary~\ref{cor:dos}]
We start with the proof of Item (i).
By~\eqref{eq:Pcasos} we note that $P=1/2$. Then
Theorem~\ref{th:bohl} with the help of Lemma~\ref{parity} in Appendix~\ref{ap:tools} implies the first part of the statement.

We continue with the second part of the statement. By Theorem~\ref{th:main} it is enough to show that $\Pi_n([1:1:-c])=[1:1:-c]\in \Delta_1$.
For short, we write $\omega:=\omega(1,|c|)$.
Observe that $2\omega>n-1$ if and only if
\begin{equation}
2\omega=(2n-1)\frac{\arccos\left(\frac{|c|}{2}\right)}{\pi}>n-1
\end{equation}
yields the left-hand side of~\eqref{eq:upper}.
Since $c\in (0,1)$, the right-hand side of~\eqref{eq:upper} follows straightforwardly.

The proof of Item (ii) is similar and we omit it.
\end{proof}

The following corollary yields the Schur stability for the trinomial associated to $[1:b:c]$ for  $m=n-1$ and complex coefficients $|b|=1$ and $0<|c|<1$. In particular, it implies Theorem~2 in~\cite{Kuruklis}, and
Theorem~1 and Theorem~2 in~\cite{Matsunaga2010}.

\begin{corollary}[Theorem~2 in~\cite{Kuruklis}, Theorem~1 and Theorem~2 in~\cite{Matsunaga2010}:
 complex case]\label{cor:complex}
\hfill

\noindent
Assume that $|b|=1$, $0<|c|<1$ and $m=n-1$.
The point $[1:b:c] \in \Omega _n$ if and only if 
$$\left(\frac{n-1}{2n-1} \right) \pi<\arccos\left(\frac{|c|}{2}\right)< \frac{\pi}{2},$$
and $[1:b:c]$ can be represented in the form  
\[
[1:e^{\ii t}e^{-\ii s}:(-1)^n|c|e^{-\ii ns}], \quad \textrm{ where }\quad s\in[0,2\pi]
\]
and 
\[
|t|<\frac{(2n-1)\arccos\left(\frac{|c|}{2}\right)-(n-1)\pi}{n}.
\]
\end{corollary}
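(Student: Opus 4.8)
The plan is to apply Theorem~\ref{th:main} directly to the projection $\Pi_n([1:b:c])$, exploiting the fact that the hypothesis $m=n-1$ collapses the two arccosine terms in $\omega$ into a single one. First I would compute, using~\eqref{def:pi}, that $\Pi_n([1:b:c])=[1:|b|:(-1)^n|c|]=[1:1:(-1)^n|c|]$, so that in the notation of Theorem~\ref{th:main} one has $x=1$ and $y=(-1)^n|c|$. Substituting $u=1$, $|v|=|c|$ and $m=n-1$ into~\eqref{eq:defomega1}, both arguments of the arccosines become $\frac{|c|}{2}$, since $\frac{u^2+v^2-1}{2u|v|}=\frac{|c|^2}{2|c|}=\frac{|c|}{2}$ and $\frac{1-u^2+v^2}{2|v|}=\frac{|c|^2}{2|c|}=\frac{|c|}{2}$. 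Hence
\begin{equation}
\omega(1,|c|)=\frac{\bigl(n+(n-1)\bigr)\arccos\left(\frac{|c|}{2}\right)}{2\pi}=\frac{(2n-1)\arccos\left(\frac{|c|}{2}\right)}{2\pi},
\end{equation}
which is the single identity driving the whole argument.

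Next I would locate the projection among the pieces in~\eqref{eq:defgammadelta}. Since $u+|v|=1+|c|>1$, the projection can never lie in $\Gamma_{n\bmod 2}$ (whose points satisfy $u+|v|<1$); the only remaining possibility is $\Delta_{n\bmod 2}$. For $n$ even, the defining conditions of $\Delta_0$ with $(u,v)=(1,|c|)$ reduce to $2\omega(1,|c|)>n-1$, because $1>0$, $|c|>0$ and $1+|c|\ge 1$ hold automatically; for $n$ odd the same happens for $\Delta_1$ with $(u,v)=(1,-|c|)$. Thus $\Pi_n([1:b:c])\in\Pi_n(\Omega_n)$ if and only if $2\omega(1,|c|)>n-1$, which by the displayed identity is exactly $\arccos\left(\frac{|c|}{2}\right)>\frac{(n-1)\pi}{2n-1}$. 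The companion inequality $\arccos\left(\frac{|c|}{2}\right)<\frac{\pi}{2}$ is free, as $0<|c|<1$ forces $0<\frac{|c|}{2}<\frac{1}{2}$.

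Finally I would feed this into Theorem~\ref{th:main}. Since the projection sits in $\Delta_{n\bmod 2}$, case~(2) of the theorem applies, so $[1:b:c]\in\Omega_n$ if and only if it admits the parametrization $[1:xe^{\ii t}e^{-\ii(n-m)s}:ye^{-\ii ns}]$ with $|t|<\frac{\pi(2\omega-n+1)}{n}$; using $n-m=1$, $x=1$ and $y=(-1)^n|c|$ this parametrization becomes precisely $[1:e^{\ii t}e^{-\ii s}:(-1)^n|c|e^{-\ii ns}]$. A short rewrite gives the stated bound,
\begin{equation}
\frac{\pi(2\omega-n+1)}{n}=\frac{(2n-1)\arccos\left(\frac{|c|}{2}\right)-(n-1)\pi}{n}.
\end{equation}
For the converse direction I would note that if $\arccos\left(\frac{|c|}{2}\right)\le\frac{(n-1)\pi}{2n-1}$, then the projection lies outside $\Pi_n(\Omega_n)$, so Corollary~\ref{cor:proj} forbids $[1:b:c]\in\Omega_n$ for every admissible $s,t$. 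The only genuinely delicate point is the bookkeeping of parity: one must check that the sign $(-1)^n$ in $y$ matches the sign constraint distinguishing $\Delta_0$ from $\Delta_1$, so that a single computation covers both parities, and that the boundary locus $u+|v|=1+|c|$ (always strictly exceeding $1$ here) never places the point on the excluded $\Gamma$-boundary. Everything else is the substitution $m=n-1$ carried through the formulas of Theorem~\ref{th:main}.
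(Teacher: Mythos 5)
Your proposal is correct and follows exactly the route the paper intends: the paper's own proof is the one-line remark that the corollary is a direct consequence of Theorem~\ref{th:main}, and your computation (the projection lands in $\Delta_{n\bmod 2}$, both arccosine arguments collapse to $|c|/2$ so that $2\omega=(2n-1)\arccos(|c|/2)/\pi$, and $2\omega>n-1$ becomes the stated inequality) is precisely the substitution that makes that remark explicit. The parity bookkeeping you flag is handled automatically by the sign $(-1)^n$ in $\Pi_n$, matching the sign conventions in~\eqref{eq:defgammadelta}, so there is no gap.
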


\begin{proof}
The proof is a direct consequence of Theorem~\ref{th:main}.
\end{proof}

\begin{remark}[A discussion about trinomials with complex coefficients]
\hfill

\begin{itemize}
\item[(i)]
We stress that Theorem~\ref{th:main} generalizes the results given 
in~\cite{Cermak2022} for trinomials with real coefficients to the setting of complex coefficients.
\item[(ii)]
We point out that, in particular, Theorem~\ref{th:main} gives the stability region for trinomials with real coefficients, see Remark~\ref{rem:cara} and Corollary~\ref{cor:stablereal}.
\item[(iii)] 
Using the so-called Schur--Cohn method, necessary and sufficient analytic conditions for the Schur stability of 
trinomials with complex coefficients and exponents $n$ general and $m=n-1$
 have been analyzed in Theorem~2 and Theorem~4 of~\cite{Cermak2014new}, see also Theorem~14 in~\cite{Cermak2015survey}.
More recently, using the so-called discrete D-decomposition method, necessary and sufficient analytic conditions for the Schur stability of trinomials with complex coefficients and 
general exponents $n$ and $m$ 
is provided in Theorem~1 of~\cite{Cermak2018JiriMat}.
We point out that our main result Theorem~\ref{th:main} is an implicit geometric parametrization of the stability region, see Remark~\ref{rem:dim}.
 
In what follows, we compare the results of Theorem~1 in \cite{Cermak2018JiriMat} with the findings of Theorem~\ref{th:main}.
We verify that Theorem~1 in \cite{Cermak2018JiriMat} implies Theorem~\ref{th:main}. Inspecting the proof below, one can see that the converse also holds true.
It is not hard to see that the condition~(4) in 
Theorem~1 of \cite{Cermak2018JiriMat}
is equivalent  to the Item~(1) in Theorem~\ref{th:main}.
In the sequel, we assume that condition given in~(5)-(6) of Theorem~1 in~\cite{Cermak2018JiriMat} is valid. They read for the trinomial~\eqref{def:fa} as follows:
$|b|+|c|\geq 1$, $|b|-1<|c|<1$ and
\begin{equation}\label{eq:cer}
\begin{split}
\hspace{2.0cm}n\,\arccos\left(\frac{1+|b|^2-|c|^2}{2|b|}\right)&+(n-m)\,\arccos\left(\frac{1-|b|^2+|c|^2}{2|c|}\right)\\
&<\arccos\left(\cos\left(n\arg(-b)-(n-m)\arg(-c)\right)\right),
\end{split}
\end{equation}
where $\arg(z)$ denotes the argument of a given non-zero complex number $z$.
Let $x=|b|$ and $y=|c|$ and recall the definition of the angles $\omega_1$, $\omega_2$ and $\omega_3$ given in Figure~\ref{triangulo} of Remark~\ref{rem:2}.
The inequality~\eqref{eq:cer} reads as follows
\[
n\omega_3+(n-m)\omega_2<\arccos\left(\cos\left(n\arg(b)-(n-m)\arg(c)+m\pi\right)\right),
\]
where we have used the fact that  $\arg(-z)=\pi+\arg(z)$, $z\in \mathbb{C}$, $z\neq 0$.
Since $\omega_1+\omega_2+\omega_3=\pi$
and $2\pi\omega=n\omega_1+m\omega_2$ (recalling $\omega:=\omega(1)$ for $r=1$ defined in~\eqref{def:PW}),
 we obtain
\begin{equation}\label{ec:one}
n\pi-2\pi\omega<\arccos\left(\cos\left(n\arg(b)-(n-m)\arg(c)+m\pi\right)\right).
\end{equation}
Since the Cosine function is even, we also have
\begin{equation}\label{ec:dos}
n\pi-2\pi\omega<\arccos\left(\cos\left(-n\arg(b)+(n-m)\arg(c)-m\pi\right)\right).
\end{equation}
Now, assume that $n$ is an even number. Since $\gcd(n,m)=1$, we have that $m$ is an odd number.
Recall that $\arccos(-\mu)=\pi-\arccos(\mu)$ for $-1\leq \mu\leq 1$ and $\cos(\varphi+k\pi)=-\cos(\varphi)$ for $\varphi\in \mathbb{R}$ and $k$ being an odd integer number.
Then~\eqref{ec:one} is equivalent to
\begin{equation}\label{ec:onenew}
\begin{split}
n\pi-2\pi\omega&<\arccos\left(-\cos\left(n\arg(b)-(n-m)\arg(c)\right)\right)\\
&=\pi-\arccos\left(\cos\left(n\arg(b)-(n-m)\arg(c)\right)\right)\\
&=\pi-n\arg(b)+(n-m)\arg(c),
\end{split}
\end{equation}
and~\eqref{ec:dos} is equivalent to
\begin{equation}\label{eq:dosnew}
\begin{split}
n\pi-2\pi\omega&<\arccos\left(-\cos\left(-n\arg(b)+(n-m)\arg(c)\right)\right)\\
&=\pi-\arccos\left(\cos\left(-n\arg(b)+(n-m)\arg(c)\right)\right)\\
&=\pi+n\arg(b)-(n-m)\arg(c).
\end{split}
\end{equation}
Note that~\eqref{ec:onenew} is equivalent to
\begin{equation}
\frac{n\arg(b)-(n-m)\arg(c)}{2\pi}<\frac{1}{2}(2\omega-(n-1))
\end{equation}
and~\eqref{eq:dosnew} is equivalent to
\begin{equation}
-\frac{1}{2}(2 \omega-(n-1))<\frac{n\arg(b)-(n-m)\arg(c)}{2\pi}.
\end{equation}
The preceding two inequalities are equivalent to
\begin{equation}\label{eq:nodo}
\frac{1}{2}(2 \omega-(n-1))>\left|\frac{n\arg(b)-(n-m)\arg(c)}{2\pi}\right|.
\end{equation}
By~\eqref{eP} and~\eqref{eq:Pcasos}
the corresponding pivots for the points $[1:a:b]$ and $[1:x:y]$ are given by
\begin{equation}
P=\frac{n(\pi+\arg(b)-\arg(c))-m(\pi-\arg(c))}{2\pi}\quad \textrm{ and }\quad
P_*:=\frac{n-m}{2},
\end{equation}
respectively.
Note that
\begin{equation}
\begin{split}
P-P_*&=\frac{n(\pi+\arg(b)-\arg(c))-m(\pi-\arg(c))}{2\pi}-\frac{(n-m)\pi}{2\pi}\\
&=\frac{n(\arg(b)-\arg(c))+m\arg(c)}{2\pi}\\
&=\frac{n\arg(b)-(n-m)\arg(c)}{2\pi}.
\end{split}
\end{equation}
Therefore,~\eqref{eq:nodo} reads as follows
\[
|P-P_*|<\frac{1}{2}(2\omega-(n-1)).
\]
By Item~(iii) in~Remark~\ref{rem:dim} we obtain
\[
|t|< \frac{2\pi}{n}\cdot\frac{1}{2}(2\omega-(n-1))=\frac{\pi}{n}(2\omega-(n-1)),
\] 
which is the statement of Item~(2) in Theorem~\ref{th:main}.

The case when $n$ is an odd number is analogous.
\end{itemize}
\end{remark}

The rest of the manuscript is organized as follows. 
In Section~\ref{sec:proof} we provide the proof of  Theorem~\ref{th:main}.
Finally, 
in Appendix~\ref{ap:tools} 
we state and show auxiliary results that we use throughout the manuscript. In addition, in Subsection~\ref{sub:proofcorstable} of Appendix~\ref{ap:tools} we give the proof of Corollary~\ref{cor:stablereal}.
In Appendix~\ref{ap:Bohlstatement}, for completeness of the presentation, we show that the exceptional cases in the original statement of Bohl's Theorem given in~\cite{Bohl} are included in the counting procedure of Theorem~\ref{th:bohl}.

\section{\textbf{Proof of the main result: Theorem~\ref{th:main}}}\label{sec:proof}
The proof of Theorem~\ref{th:main} is given in Subsection~\ref{sub:proof}. It relies on
Theorem~\ref{th:bohl} and  basic arithmetic properties about open intervals (the number of integers that they contain). This is the content of Lemma~\ref{basicfactsonintervals1}, Lemma~\ref{basicfactsonintervals2} and Lemma~\ref{basicfactsonintervals3}, which is proved in Subsection~\ref{sub:ari}.

\subsection{\textbf{Arithmetic properties for open intervals}}\label{sub:ari}
From now on, unless otherwise  specified, 
\[
I=(P-\omega, P+\omega)
\] 
denotes an open interval with center (pivot)  $P\in \mathbb{R}$ and radius $\omega \geq 0$ with the understanding $I=\emptyset$ for $\omega=0$ and any $P\in \mathbb{R}$. The numbers $P-\omega$ and $P+\omega$ are called the boundary points of $I$.
For shorthand, we denote the length of $I$ by $\mu(I)$  and note that $\mu(I)=2\omega$.
Moreover, we denote by $\#I$ the cardinality of the set $I\cap \mathbb{Z}$.

In the sequel, we show that the length of an open interval $I$ with no integer boundary points and  containing precisely $k$ integers satisfies $k-1<\mu(I)<k+1$. We point that  that for $k=0$, the lower bound $k-1=-1$ is not informative.
This is the content of the following lemma.
\begin{lemma}[Localization of the lenght given the cardinality]\label{basicfactsonintervals1} 
\hfill

\noindent
Assume that $\#I=k$ for some $k\in \mathbb{N}\cup \{0\}$ and the boundary points of $I$ are not integers, then it follows that $k-1 < 2\omega < k+1$.
\end{lemma}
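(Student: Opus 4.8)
The plan is to prove the two-sided bound by separately controlling the cardinality from the length, which amounts to a clean counting argument once we fix notation for the endpoints. Write $L:=P-\omega$ and $R:=P+\omega$ for the two boundary points, so that $\mu(I)=R-L=2\omega$, and recall that by hypothesis neither $L$ nor $R$ is an integer and $\#I=\#(I\cap\mathbb{Z})=k$. The key elementary fact I would use is that for an open interval with non-integer endpoints, the number of integers it contains is $\lfloor R\rfloor-\lceil L\rceil+1$ when $I$ is nonempty, together with the obvious estimates $R-1<\lfloor R\rfloor<R$ and $L<\lceil L\rceil<L+1$ (the strict inequalities being exactly where the non-integrality of the endpoints is used).

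First I would dispose of the trivial direction, the lower bound for $k=0$: here the claim $2\omega>-1$ is vacuous since $\omega\geq 0$, so nothing is required. For $k\geq 1$ the interval contains at least one integer, hence is nonempty, so $\omega>0$ and the floor/ceiling description applies. I would then prove the upper bound $2\omega<k+1$: from $k=\lfloor R\rfloor-\lceil L\rceil+1$ and the strict bounds $\lfloor R\rfloor>R-1$, $\lceil L\rceil>L$ (equivalently $-\lceil L\rceil<-L$), I get
\begin{equation}
k=\lfloor R\rfloor-\lceil L\rceil+1> (R-1)-L+1=R-L=2\omega,
\end{equation}
so $2\omega<k$ and a fortiori $2\omega<k+1$. (In fact one obtains the slightly sharper $2\omega<k$, but $k+1$ is all that is claimed.)

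Next the lower bound $2\omega>k-1$ for $k\geq 1$: using $\lfloor R\rfloor<R$ and $\lceil L\rceil<L+1$ (so $-\lceil L\rceil>-L-1$) in the same identity gives
\begin{equation}
k=\lfloor R\rfloor-\lceil L\rceil+1<R-(L+1)+1+1=(R-L)+1=2\omega+1,
\end{equation}
whence $k-1<2\omega$. Combining the two displays yields $k-1<2\omega<k+1$, as desired.

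The argument is essentially bookkeeping, so the only real point of care is making sure each of the four elementary inequalities on floors and ceilings is strict exactly where it must be; this is precisely what the non-integrality of the boundary points buys us, and I would flag it explicitly rather than letting it slip in tacitly. A secondary subtlety worth stating cleanly is the $k=0$ case, where one should note that $I\cap\mathbb{Z}=\emptyset$ permits $\omega=0$ (the empty interval), so only the upper bound $2\omega<1$ is substantive there; I would either treat $k=0$ by a direct remark that $\#I=0$ with non-integer endpoints forces $\lfloor R\rfloor=\lceil L\rceil-1$ and hence $2\omega<1$, or simply observe that the counting identity degenerates and handle it by hand. No deeper obstacle is expected; the lemma is a robust arithmetic fact and the proof is short.
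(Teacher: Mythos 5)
Your overall strategy is sound and is essentially the paper's own argument in different notation: the paper picks the unique $\ell\in\mathbb{Z}$ with $\ell-1<P-\omega<\ell$ and $\ell+k-1<P+\omega<\ell+k$, which is exactly your identity $k=\lfloor R\rfloor-\lceil L\rceil+1$ with $\ell=\lceil L\rceil$, and then subtracts. However, your first display is invalid as written and its conclusion is false. To bound $k=\lfloor R\rfloor-\lceil L\rceil+1$ from below you need a \emph{lower} bound on $-\lceil L\rceil$, namely $-\lceil L\rceil>-(L+1)$ (from $\lceil L\rceil<L+1$); instead you invoke $\lceil L\rceil>L$, i.e.\ $-\lceil L\rceil<-L$, which points the wrong way, so the chain
\[
\lfloor R\rfloor-\lceil L\rceil+1>(R-1)-L+1
\]
does not follow. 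Indeed the conclusion you draw from it, the ``sharper'' bound $2\omega<k$, is simply false: for $I=(-0.9,\,0.9)$ one has $k=1$ but $2\omega=1.8$. The correct pairing gives $k>(R-1)-(L+1)+1=2\omega-1$, i.e.\ $2\omega<k+1$, which is all the lemma claims --- there is no sharper bound to be had.

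Symmetrically, your second display reaches the right conclusion $k<2\omega+1$ but cites the wrong ingredient: an upper bound on $k$ needs $-\lceil L\rceil<-L$ (from $\lceil L\rceil>L$), not the lower bound $-\lceil L\rceil>-L-1$ that you quote, and the stray ``$+1+1$'' in that display papers over the mismatch. In short, you have swapped the two estimates on $\lceil L\rceil$ between the two halves of the argument. Once they are put back in the right places ($\lfloor R\rfloor>R-1$ together with $\lceil L\rceil<L+1$ for the upper bound on $2\omega$; $\lfloor R\rfloor<R$ together with $\lceil L\rceil>L$ for the lower bound), the proof goes through and coincides with the paper's.
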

\begin{proof}
By hypothesis there exists a unique $\ell\in \mathbb{Z}$ such that 
\begin{equation}\label{eq:uno}
\ell-1<P-\omega< \ell
\quad \textrm{ and }\quad
\ell+k-1<P+\omega<\ell+k.
\end{equation}
Since $2\omega=(P+\omega)-(P-\omega)$,~\eqref{eq:uno} implies
\begin{equation}
\begin{split}
k-1=\left(\ell +(k-1)\right)-\ell <2\omega<\ell +k -(\ell -1)=k+1.
\end{split}
\end{equation}
The preceding inequality concludes the statement.
\end{proof}

\begin{remark}[Boundary points of $I$]\label{freebdrypoints}
\hfill

\noindent 
If in Lemma~\ref{basicfactsonintervals1}
we omit the restriction that the boundary points of $I$ are not integers, then it follows that
$k-1 < 2\omega \leq k+1$. 
\end{remark}
Now, we study in detail the following question.
Given an open interval $I$ satisfying $k-1<\mu(I)<k+1$, 
\textit{how many integers does it contain?}
It is not hard to see that there are three possible values: $k-1$, $k$ or $k+1$. Moreover, once the length of the interval  $I$ is fixed, there are only two possible values depending on
whether $k-1<\mu(I)\leq k$ or $k<\mu(I)<k+1$.
In the first case, the possible values are $k-1$ and $k$, while in the second case, the possible values are $k$ and $k+1$. This is the content of the following lemma.

\begin{lemma}[Localization of the cardinality given the length]\label{basicfactsonintervals2} 
\hfill

\noindent
Let $k$
be a non-negative integer such that
$k-1<2\omega <k+1$ ($0\leq 2\omega < 1$ for $k=0$) then $\#I \in \{k-1, k,k+1\}$ ($\#I \in \{0, 1\}$ for $k=0$). 
In addition,
\begin{enumerate}
\item[(i)] if $k-1<2\omega <k$ (equivalently, if $2\omega -(k-1) < k+1-2\omega$) 
then $\#I\in \{k-1,k\}$.
\item[(ii)] if $2\omega = k$ (equivalently, if $2\omega -(k-1) = k+1-2\omega$) then $\#I=k$ or the boundary points of $I$ are integers and 
$\#I=k-1$.
\item[(iii)] if  $k< 2\omega <k+1$ (equivalently, if $2\omega -(k-1) > k+1-2\omega$)  then 
$\#I\in \{k,k+1\}$.
\end{enumerate}
\end{lemma}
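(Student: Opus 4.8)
The plan is to read off $\#I$ from the length $2\omega$ by inverting the estimate of Lemma~\ref{basicfactsonintervals1}. The starting point is the two-sided bound that holds with no restriction on the endpoints: if $\#I=j$ for some $j\in\mathbb{N}\cup\{0\}$, then Remark~\ref{freebdrypoints} gives $j-1<2\omega\le j+1$, which I would rewrite as $2\omega-1\le j<2\omega+1$. Feeding the hypothesis $k-1<2\omega<k+1$ into this bound, and using that $j$ is an integer with $j\ge 2\omega-1>k-2$ and $j<2\omega+1<k+2$, forces $k-1\le j\le k+1$, that is $\#I\in\{k-1,k,k+1\}$. For $k=0$ the lower bound $j\ge -1$ is vacuous, and one reads off $\#I\in\{0,1\}$.

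For items (i) and (iii) I would simply sharpen one side of the same inequality. If $k-1<2\omega<k$, then $2\omega+1<k+1$ forces $j\le k$, so together with $j\ge k-1$ we obtain $\#I\in\{k-1,k\}$. Symmetrically, if $k<2\omega<k+1$, then $2\omega-1>k-1$ forces $j\ge k$, so together with $j\le k+1$ we obtain $\#I\in\{k,k+1\}$. The equivalent reformulations in terms of $2\omega-(k-1)$ versus $k+1-2\omega$ are immediate rearrangements and need no separate argument.

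The delicate point, which I expect to be the main obstacle, is item (ii), the degenerate length $2\omega=k$. Here the coarse claim already yields $\#I\in\{k-1,k\}$, so the only task is to separate the two cases by the position of the endpoints. The key observation is that, since $2\omega=k$ is an integer, the two boundary points satisfy $(P+\omega)-(P-\omega)=2\omega=k\in\mathbb{Z}$, so they have the same fractional part and are therefore simultaneously integers or simultaneously non-integers. If both are non-integers, Lemma~\ref{basicfactsonintervals1} applies directly: writing $\#I=j$ with non-integer endpoints gives $j-1<k<j+1$, hence $j=k$. If both are integers, then the integers lying strictly inside $(P-\omega,P+\omega)$ are precisely $P-\omega+1,\dots,P+\omega-1$, whose count is $(P+\omega-1)-(P-\omega+1)+1=2\omega-1=k-1$. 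This produces exactly the stated dichotomy: either $\#I=k$, or the boundary points are integers and $\#I=k-1$. The crux is thus the endpoint bookkeeping: the integrality of $2\omega$ couples the two endpoints, so the count can drop from $k$ to $k-1$ only when both endpoints are integers.
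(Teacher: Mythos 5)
Your proof is correct and follows essentially the same route as the paper: both invert the bound of Lemma~\ref{basicfactsonintervals1} (via Remark~\ref{freebdrypoints}) to pin $\#I$ down to $\{k-1,k,k+1\}$, and then sharpen one side of the inequality to get items (i) and (iii). Your explicit treatment of item (ii) --- noting that $2\omega=k\in\mathbb{Z}$ forces the two endpoints to have the same fractional part, hence to be integers simultaneously, and then counting directly in the integer-endpoint case --- is actually more complete than the paper, which dismisses (ii) and (iii) as ``analogous'' to (i) even though the endpoint dichotomy in (ii) requires exactly the extra observation you supply.
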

\begin{proof} 
The proof is done by contradiction. 
Let us assume that $\#I=k+j$ for some $|j|>1$. 
By Remark~\ref{freebdrypoints} we have that
\[k+j-1<2\omega \leq k+j+1,\]
which leads to a contradiction to the hypothesis $k-1<2\omega <k+1$.
As a consequence, $\#I \in \{k-1, k,k+1\}$.

In the sequel, we show Item~(i). Assume that $k-1< 2\omega <k$. 
We claim that 
no interval of length $2\omega$ contains exactly $k+1$ integers. Indeed, by Remark~\ref{freebdrypoints} we have that  $k<2\omega \leq k+2$ and it contradicts the assumption that $k-1< 2\omega <k$.

Now, we provide two examples of open intervals $I_1$ and $I_2$ of length $2\omega$ with precisely $k$ and $k-1$ integers, respectively.
For instance, the open intervals 
\[
I_1:=\left( \frac{k-1}{2}-\omega , \, \, \omega + \frac{k-1}{2}\right), \quad I_2:=\left( 0, 2\omega\right)
\quad 
\]
have length $2\omega$, $I_1\cap\mathbb{Z}=\{0, \ldots, k-1\}$ and $I_2\cap\mathbb{Z}=\{1, \ldots, k-1\}$.

The proofs of Item~(ii) and Item~(iii) are analogous and we omit them.
\end{proof}

\begin{remark}[At most three different values given the length]
\hfill

\noindent
Roughly speaking, Lemma~\ref{basicfactsonintervals2} can be interpreted as follows. By moving the center (pivot) $P$ of the original interval $I$ and preserving its length $2\omega$, it follows that
the amount of integers in the new interval diminishes by one, does not change or increases by one. In addition, Lemma~\ref{basicfactsonintervals2} yields precisely when the new interval remains with the same amount of integers, decreases by one or increases by one, according between which integers the length of the interval is located. 
\end{remark}

Broadly speaking, the next lemma allows us to quantify how far we can move the initial pivot $P \in \mathbb{Z}/2:=\{\ell /2 : \ell \in \mathbb{Z}\}$ of the open interval $I$ with no integer boundary points, preserving the length of the interval and the number of integers contained in the new interval, which  also has no integer boundary points.

\begin{lemma}[Localization of the cardinality given the lenght and the pivot]\label{basicfactsonintervals3} 
\hfill

\noindent
Assume that $\#I=k\geq 0$, the boundary points of $I$ are not integers and 
$P \in \mathbb{Z}/2$.  We consider the open interval
$I':=(P'-\omega, P'+\omega)$ and define
\[
\nu_1:=2\omega -(k-1),\qquad \nu_2:=
k+1-2\omega.
\]
Then it follows that
\begin{enumerate}
\item[(i)] If  $0\leq k-1<2\omega <k$ and $|P-P'|<\nu_1/2$ then $\#I'=k$ and no boundary point of $I'$ is an integer.
\item[(ii)] If  $0\leq k-1<2\omega <k$ and $|P-P'|=\nu_1/2$ then some boundary point of $I'$ is integer and $\#I'=k-1$.
\item[(iii)] If  $k=2 \omega$ and $|P-P'|<\nu_1/2=\nu_2/2$ then $\#I'=k$ and no boundary point of $I'$ is an integer.
\item[(iv)] If  $k=2 \omega$ and $|P-P'|=\nu_1/2=\nu_2/2$ then both boundary points of $I'$ are integers and $\#I'=k-1$.
\item[(v)] If $k<2\omega < k+1$ and  $|P-P'|<\nu_2/2$ then $\#I'=k$ and no boundary point of $I'$ is an integer.
\item[(vi)] If  $k<2\omega < k+1$ and  $|P-P'|=\nu_2/2$ then some boundary point of $I'$ is an integer and $\#I'=k$.
\end{enumerate}
\end{lemma}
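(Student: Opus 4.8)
The plan is to exploit the symmetry forced by the hypothesis $P\in\mathbb{Z}/2$. Since $I=(P-\omega,P+\omega)$ is symmetric about its center $P$, and $P$ is either an integer or a half-integer, the set $I\cap\mathbb{Z}$ of the $k$ integers it contains is symmetric about $P$. First I would record the two local pictures. If $P\in\mathbb{Z}$, then $k$ is odd, say $k=2j+1$, and the farthest integer of $I$ lies at distance $j$ from $P$; if $P\in\mathbb{Z}/2\setminus\mathbb{Z}$, then $k$ is even, say $k=2j$, and the farthest integer of $I$ lies at distance $j-\tfrac12$ from $P$. In both cases let $d$ denote this farthest distance; because consecutive integers differ by $1$ and the configuration is symmetric, the nearest integer lying \emph{outside} $I$ is at distance $d+1$ from $P$. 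The hypothesis that the boundary points of $I$ are not integers then gives the strict inequalities $d<\omega<d+1$.

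The key step is the single elementary identity, valid in both parity cases,
\[
\omega-d=\frac{\nu_1}{2},\qquad (d+1)-\omega=\frac{\nu_2}{2},
\]
which I would verify by inserting the respective values of $d$ and $2\omega$ into the definitions $\nu_1=2\omega-(k-1)$ and $\nu_2=k+1-2\omega$ (and using $\nu_1+\nu_2=2$ for the second equality). This identity is the whole engine of the proof: it reinterprets the abstract thresholds $\nu_1/2$ and $\nu_2/2$ as concrete geometric gaps, namely $\nu_1/2$ is exactly how far the boundary of $I$ sits beyond the farthest integer it contains, while $\nu_2/2$ is exactly the distance from that boundary to the first integer outside $I$.

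With this dictionary I would analyze the translated interval $I'=(P'-\omega,P'+\omega)$ by writing $\delta:=|P-P'|$ and assuming $P'\ge P$ without loss of generality (the opposite case is the mirror image of this one). Shifting right by $\delta$ moves the left boundary to $P-\omega+\delta$ and the right boundary to $P+\omega+\delta$. The leftmost integer of $I$, at $P-d$, leaves the open interval $I'$ precisely when $P-\omega+\delta\ge P-d$, i.e.\ when $\delta\ge\omega-d=\nu_1/2$, while the first integer on the right, at $P+(d+1)$, is captured by $I'$ precisely when $P+\omega+\delta>P+(d+1)$, i.e.\ when $\delta>(d+1)-\omega=\nu_2/2$; since $\nu_1/2,\nu_2/2<1$, no second integer can cross either boundary for the relevant range of $\delta$. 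Comparing $\delta$ against these two thresholds, and using $\nu_1+\nu_2=2$ to see which threshold is smaller in each regime, resolves all six cases at once: in (i), (iii), (v) the hypothesis forces $\delta<\min\{\nu_1,\nu_2\}/2$, so neither boundary reaches an integer and no integer enters or leaves, giving $\#I'=k$ with no integer boundary; in (ii) and (iv) the value $\delta=\nu_1/2$ turns $P-d$ into an \emph{excluded} boundary point of the open interval, so that integer is genuinely lost and $\#I'=k-1$; and in (vi) the value $\delta=\nu_2/2<\nu_1/2$ makes $P+(d+1)$ a boundary point of $I'$, which is therefore still excluded, so no integer is gained and $\#I'=k$, but now a boundary point is an integer.

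The main obstacle I anticipate is bookkeeping rather than conceptual difficulty: the difference between $\#I'=k$ and $\#I'=k-1$ at the critical shifts of cases (ii), (iv), (vi) rests entirely on the open-interval convention, so I must be scrupulous about whether an integer landing exactly on $P'\pm\omega$ is thereby excluded (in (ii) and (iv) the vanishing integer is truly removed, whereas in (vi) the incoming integer never actually enters). The device that keeps this manageable is the boxed identity above, since it lets me treat the integer and half-integer subcases $P\in\mathbb{Z}$ and $P\in\mathbb{Z}/2\setminus\mathbb{Z}$ uniformly and thereby avoid a proliferation of parity-dependent subcases.
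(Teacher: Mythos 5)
Your argument is correct and is in essence the paper's own proof: both track which integers cross the endpoints of the open interval as the pivot is shifted, exploiting the symmetry of $I\cap\mathbb{Z}$ about $P\in\mathbb{Z}/2$ and the resulting parity of $k$ (the paper's Lemma~\ref{parity}). The difference is only presentational --- the paper splits into the cases $P\in\mathbb{Z}$ and $P\in\mathbb{Z}+1/2$ and writes out items (i) and (ii) in full, whereas your identities $\omega-d=\nu_1/2$ and $(d+1)-\omega=\nu_2/2$ unify the two parities and dispatch all six items at once (just note that for $k=0$ in items (v)--(vi) the ``farthest integer of $I$'' does not exist and $d=-1/2$ must be read formally).
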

\begin{proof}
We start with the proof of Item~(i).
The proof of Item~(i) is divided in two cases:  $P\in \mathbb{Z}$ or $P\in (\mathbb{Z}+1/2)$.

First, we assume that $P \in \mathbb{Z}$.
By Item~(i) of Lemma~\ref{parity} in Appendix~\ref{ap:tools}, we have that $k=2k'+1$ for some non-negative integer $k'$.
 Moreover, $$I\cap \mathbb{Z}=
 \{P-k', P-k'+1, \ldots, P,\ldots, P+k'-1, P+k'\}.$$
Since $|P'-P|<\nu_1/2=\omega-\left(\frac{k-1}{2}\right)=\omega-k'$, we have that
\begin{equation}\label{uno}
P'-\omega<P-k' \quad \textrm{ and } \quad 
 P+k'< P'+\omega,
\end{equation}
Then
$|P'-P|<k'+1-\omega$ due to $2\omega <k=2k'+1$.
The latter yields
\begin{equation}\label{cuatro}
 P-(k'+1)  <  P'-\omega \quad \textrm{ and }\quad
 P'+\omega <  P+k'+1.
\end{equation}
By~\eqref{uno} to~\eqref{cuatro} we deduce that \[I' \cap \mathbb{Z}=\{P-k', P-k'+1,\ldots,P,\ldots, P+k'-1, P+k'\}\] and $P'-\omega$, $P'+\omega$ are not integers. Therefore, 
$\#I'=2k'+1=k$. 

We continue the proof with the case  $P=\ell +1/2$ for some $\ell \in \mathbb{Z}$. 
By Item~(ii) of Lemma~\ref{parity} in Appendix~\ref{ap:tools} we have that 
$\#I=2k'$ for some non-negative integer $k'$.
Moreover, 
\[
I\cap \mathbb{Z}=\{\ell-k'+1, \ell -k'+2, \ldots , \ell+k'-1,\ell+k'\}.
\]
Since $|P'-(\ell+1/2)|=|P'-P|<\omega-k'+1/2$, it follows that
\begin{equation}\label{cinco}
P'-\omega  <  \ell-k'+1\quad \textrm{ and }\quad
 \ell+k' < P'+\omega .
\end{equation}
The hypothesis $2\omega <k=2k'$ also reads as $\omega -k'+1/2<k'+1/2-\omega$. As consequence,
$|P'-(\ell+1/2)|<\omega-k'+1/2<k'+1/2-\omega$, which implies
\begin{equation}\label{ocho}
 \ell-k'  <  P'-\omega\quad 
 \textrm{ and }\quad
 P'+\omega  <  \ell+k'+1.
\end{equation}
By~\eqref{cinco} and~\eqref{ocho} it follows that 
\[
I' \cap \mathbb{Z}=
\{\ell-k'+1, \ell -k'+2, \ldots , \ell+k'\}
=I \cap \mathbb{Z},
\] 
and the numbers $P'-\omega$ and $P'+\omega$ are not integers.
This completes the proof of Item~(i).

We continue with the proof of Item~(ii). As in the proof of Item~(i), the proof of Item~(ii) is divided in two cases:  $P\in \mathbb{Z}$ or $P\in (\mathbb{Z}+1/2)$.
We point out that $P'\neq P$ due to the assumptions.

We start assuming $P \in \mathbb{Z}$.
By Item~(i) of Lemma~\ref{parity} in Appendix~\ref{ap:tools}, we have that $k=2k'+1$ for some non-negative integer $k'$. In addition, 
\[
I\cap \mathbb{Z}=\{P-k', P-k'+1, \ldots,P,\ldots,P+k'-1, P+k'\}.
\]
The hypothesis $ |P'-P|=\nu_1/2=\omega-k'>0$ yields
$P'-P=\omega-k'$ or $P'-P=-(\omega-k')$. If $P'-P=\omega -k'$ then 
\begin{equation}\label{nueve}
P'-\omega =  P-k' \in \mathbb{Z}.
\end{equation}
The inequality $P-P'<0<\omega-k'$ implies that 
\begin{equation}\label{eq:media}
P+k'<P'+\omega.
\end{equation}
By hypothesis $2\omega <k=2k'+1$, which implies $\omega -k'<k'+1-\omega$. As a consequence,
$|P-P'|=\omega-k'<k'+1-\omega$, which yields 
\begin{equation}\label{eq:ultima}
P'+\omega < P+k'+1.
\end{equation}
By~\eqref{nueve},~\eqref{eq:media} 
and~\eqref{eq:ultima} we deduce that 
\[
I'\cap \mathbb{Z}:=\{P-k'+1,P-k',\ldots,P,\ldots,
P+k'-1,P+k'\} 
\] and $P'-\omega$ is an integer. The proof for the case $P'-P=-(\omega-k')$ is analogous and we omit it.

We continue the proof with the case  $P=\ell +1/2$ for some $\ell \in \mathbb{Z}$.
By Item~(ii) of Lemma~\ref{parity} in Appendix~\ref{ap:tools} we have that $k=2k'$ for some non-negative integer $k'$.
Moreover, 
\[
I\cap \mathbb{Z}=\{\ell-k'+1, \ell -k'+2, \ldots , \ell+k'-1,\ell+k'\}.
\]
As in the previous case, we have that $P'-(\ell+1/2)=P'-P=\omega-k'+1/2>0$ 
or $P'-P=-(\omega-k'+1/2)<0$.
We show the case $P'-P=\omega-k'+1/2$. The proof for the case $P'-P=-(\omega-k'+1/2)$ is analogous and we omit it.

We note that 
$P'-(\ell+1/2)=P'-P=\omega-k'+1/2>0$
 implies 
\begin{equation}\label{eq:ten}
P'-\omega=\ell-k'+1.
\end{equation} 
In addition, the inequality $\ell+1/2-P'=P-P'<0<\omega-k'+1/2$ implies that
 \begin{equation}\label{trece}
 \ell+k' < P'+\omega .
\end{equation}
By hypothesis $2\omega <k=2k'$, we have that $\omega -k'+1/2<k'+1/2-\omega$. Thus,
\[
|P'-\ell-1/2|=|P'-P|=\omega-k'+1/2<k'+1/2-\omega,
\]
which implies
\begin{equation}\label{catorce}
 P'+\omega <  \ell+k'+1.
\end{equation}
By~\eqref{eq:ten},~\eqref{trece},~\eqref{catorce} we obtain 
\[
I'\cap \mathbb{Z}=\{\ell -k'+2, \ldots , \ell+k'-1,\ell+k'\}
\]
and $P'-\omega$ is an integer. 
The proof of Item~(ii) is complete.

The proofs of Item~(iii), Item~(iv), Item~(v) and Item~(vi) are similar and we left the details to the interested reader. 
\end{proof}

In the following lemma, Item~(1) and 
Item~(2) are restatements of 
Lemma~\ref{basicfactsonintervals3} in a condensed form. We state them here for completeness of the presentation.
 Additionally, we introduce 
 Item~(3) which allows us to see when the new interval $I'$ has different cardinality from the initial interval $I$. 
\begin{lemma}[Variation of the pivot]\label{formareducida}
\hfill

\noindent
Assume that $\# I =k$, the boundary points of $I$
are not integers and $P \in \mathbb{Z}/2:=\{ \ell/2 : \ell \in 
\mathbb{Z}\}$.  We denote the open interval $(P'-\omega, P'+\omega)$ 
by $I'$, and 
\begin{equation}\label{eq:nu}
\nu = \min \left\{ 2\omega -(k-1), 
k+1-2\omega \right\}.
\end{equation}
\begin{enumerate}
\item If  $|P-P'| <\frac{\nu}{2}$ then $\#I' =k$ and no boundary point of $I'$ is an integer.
\item If  $|P-P'|=\frac{\nu}{2}$ then a boundary point of $I'$ is integer. 
\item If $\frac{\nu}{2} < |P-P'| \leq \frac{1}{2}$ then $\# I' \ne k$
and no boundary point of $I'$ is an integer.
\end{enumerate}
\end{lemma}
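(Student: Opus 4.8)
The plan is to obtain Items~(1) and~(2) directly from Lemma~\ref{basicfactsonintervals3}, and to prove the genuinely new Item~(3) by tracking how the two boundary points of the translated interval $I'$ cross integers as its pivot moves away from $P$.

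First I would fix the bookkeeping. Since $\#I=k$ and the boundary points of $I$ are not integers, Lemma~\ref{basicfactsonintervals1} gives $k-1<2\omega<k+1$, so exactly one of the regimes $k-1<2\omega<k$, $\,2\omega=k$, or $k<2\omega<k+1$ occurs. Writing $\nu_1:=2\omega-(k-1)$ and $\nu_2:=k+1-2\omega$ as in Lemma~\ref{basicfactsonintervals3}, we have $\nu_1+\nu_2=2$, $\nu=\min\{\nu_1,\nu_2\}$, and $\nu=\nu_1<\nu_2$, $\nu_1=\nu_2=1$, $\nu=\nu_2<\nu_1$ in the three regimes respectively. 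With this dictionary, Items~(1) and~(2) are exactly the conclusions of Items~(i)--(ii), (iii)--(iv) and (v)--(vi) of Lemma~\ref{basicfactsonintervals3}, matched regime by regime, so they require nothing new.

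For Item~(3) I would first reduce to a one-sided statement. The reflection $x\mapsto 2P-x$ maps $I$ to itself, maps $I'$ to the interval with pivot $2P-P'$, and permutes $\mathbb{Z}$ since $2P\in\mathbb{Z}$; as it preserves both cardinalities and the property of having integer boundary points, it suffices to treat $P'=P+\delta$ with $0<\delta\le\tfrac12$. As $\delta$ grows, $\#I'$ changes only when the upper boundary $P+\delta+\omega$ or the lower boundary $P+\delta-\omega$ hits an integer; crossing the former lets one integer in from above, crossing the latter pushes one integer out from below. The first such values are $\delta_+:=\lceil P+\omega\rceil-(P+\omega)$ and $\delta_-:=\lceil P-\omega\rceil-(P-\omega)$, and the later crossings occur at $\delta_++j$ and $\delta_-+j$ for integers $j\ge1$. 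The key computation uses the same reflection: it forces $\lfloor P-\omega\rfloor=2P-\lceil P+\omega\rceil$, and counting the integers of $I$ then gives $\lceil P+\omega\rceil=P+\tfrac{k+1}{2}$, whence
\[
\delta_+=\tfrac{k+1}{2}-\omega=\tfrac{\nu_2}{2},\qquad
\delta_-=1-\delta_+=\omega-\tfrac{k-1}{2}=\tfrac{\nu_1}{2}.
\]

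Finally I would assemble the conclusion from the identity $\delta_++\delta_-=1$. At most one of $\delta_+,\delta_-$ is $\le\tfrac12$, with both equal to $\tfrac12$ exactly when $2\omega=k$; in that regime $\nu/2=\tfrac12$, so the window $\nu/2<\delta\le\tfrac12$ is empty and Item~(3) holds vacuously. Otherwise $\nu<1$: exactly one of $\delta_\pm$ equals $\nu/2<\tfrac12$ and the other equals $1-\nu/2>\tfrac12$, while all later crossings exceed $\tfrac12$. Hence the unique boundary--integer coincidence in $(0,\tfrac12]$ is at $\delta=\nu/2$, so for every $\delta\in(\nu/2,\tfrac12]$ no boundary point of $I'$ is an integer and $\#I'$ equals its post-crossing value, namely $k+1$ if $\nu=\nu_2$ (one integer has entered) and $k-1$ if $\nu=\nu_1$ (one integer has left); in particular $\#I'\ne k$. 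The main obstacle is exactly this last step: rather than controlling a single crossing, one must rule out any further boundary--integer coincidence throughout the whole half-unit window, and this is precisely what the relation $\delta_++\delta_-=1$ together with $\{\delta_+,\delta_-\}=\{\nu_1/2,\nu_2/2\}$ delivers.
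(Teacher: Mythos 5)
Your proposal is correct, and Items~(1) and~(2) are handled exactly as in the paper, namely by observing that they are a condensed restatement of Lemma~\ref{basicfactsonintervals3} matched regime by regime via $\nu=\min\{\nu_1,\nu_2\}$. For Item~(3), however, your route is genuinely different from the paper's. The paper shifts the pivot by exactly one half to $P_0=P-1/2$ and $Q_0=P+1/2$: Lemma~\ref{parity} forces $\#I_0=\#J_0\neq k$ because moving the pivot between $\mathbb{Z}$ and $\mathbb{Z}+1/2$ flips the parity of the integer count, and then, since $P'$ lies within $(1-\nu)/2=\nu_0/2$ of one of these half-integer pivots, Item~(1) applied to the shifted interval gives $\#I'=\#I_0\neq k$ with non-integer boundary points. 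You instead reduce by the reflection $x\mapsto 2P-x$ (legitimate precisely because $2P\in\mathbb{Z}$) and track the boundary crossings directly, computing the first crossing thresholds $\delta_+=\nu_2/2$ and $\delta_-=\nu_1/2$ from $\lceil P+\omega\rceil=P+\tfrac{k+1}{2}$ and exploiting $\delta_++\delta_-=1$ to show that exactly one crossing occurs in $(0,\tfrac12]$, at $\delta=\nu/2$. Both arguments hinge on the hypothesis $P\in\mathbb{Z}/2$ — the paper through the parity lemma, you through the integrality of $2P$ — and both are complete. Your version is more self-contained (it does not reuse Item~(1) or Lemma~\ref{parity}) and yields slightly more information, namely the exact value $\#I'=k+1$ or $k-1$ according to whether $\nu=\nu_2$ or $\nu=\nu_1$; the paper's version is shorter because it recycles machinery already established.
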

\begin{proof}
We prove Item~(3). For $P_0:=P-1/2$ and $Q_0 := P+1/2$, Lemma~\ref{parity} in Appendix~\ref{ap:tools} yields that the intervals $I_0=(P_0 -\omega, P_0+\omega)$ and
$J_0=(Q_0-\omega, Q_0+\omega)$ satisfy
$\# I_0 = \# J_0 \ne k$. 

By hypothesis $\frac{\nu}{2}<|P-P'|\leq \frac{1}{2}$ (so $\nu<1$, 
and it means that no boundary point of $I_0$ or $J_0$ is an integer), which implies
\[|P_0-P'|< \frac{1-\nu}{2}  \quad\textrm{ or } \quad |Q_0-P'|< \frac{1-\nu}{2}.\] 
In the sequel, we show that $\#I'=\# I_0=\# J_0$. 
Since $\#I_0\neq k$ and $\mu(I_0)=2\omega$, 
Lemma~\ref{basicfactsonintervals2} implies that $\#I_0\in \{k-1,k+1\}$.
Assume that $\#I_0=k-1$. Since $\mu(I_0)=2\omega$,
Lemma~\ref{basicfactsonintervals1} yields $k-2<2\omega <k$.
By hypothesis $\#I=k$ and 
$\mu(I)=2\omega$, then 
Lemma~\ref{basicfactsonintervals1} implies $k-1<2\omega <k+1$. Therefore, $k-1<2\omega<k$
and hence $\nu_0=k-2\omega$,
where $\nu_0$ is the corresponding value 
of~\eqref{eq:nu} for $I_0$.
Since
\[
1-\nu=1-(2\omega -(k-1))=\nu_0,
\]
Item~(1) of Lemma~\ref{formareducida} applied to $I_0$ implies $\# I'= \#I_0=k-1$ and no boundary point of $I'$ is an integer.
The case $\#I_0=k+1$ is analogous.
In summary, $\# I '\ne k$ and no boundary point of $I'$ is an integer.
\end{proof}

\subsection{\textbf{Proof of Theorem~\ref{th:main}}}\label{sub:proof}

In this subsection, we show Theorem~\ref{th:main}, which is a
consequence of what we have already proved up to here.

\begin{proof}[Proof of Theorem~\ref{th:main}]
Without loss of generality, we assume that $n$ is even number (the proof for the case $n$ is an odd number is analogous). 

We start showing the necessary implication.
Let $[1:b:c] \in \Omega _n$ be fixed and write $b=xe^{\ii \theta_1}$ and $c=ye^{\ii \theta_2}$ for some $\theta_1,\theta_2\in \mathbb{R}$, where $x=|b|$, $y=|c|$.
By Proposition~7.9 in~\cite{BarreraMaganaNavarrete} we have that
\begin{equation}\label{eq:igualdad}
[1:x:y]:=[1:|b|:|c|] \in \Pi_n(\Omega _n)\subset \Omega_n\cap \mathbb{P}^2 _{\mathbb{R}},
\end{equation} 
where $\Pi_n$ is defined in~\eqref{def:pi}.
For convenience, we introduce the  following notation
\begin{equation}\label{eq:action}
e^{\ii \theta} \cdot [1: b: c] := [1:b e^{-\ii(n-m)\theta}: c e^{-in \theta}].
\end{equation}
In particular, for $\sigma = \frac{\theta _2}{n}$ we have that
\begin{equation}\label{eq:sigmaact}
e^{\ii \sigma}\cdot [1:b:c]=[1:x e^{\ii t^\prime}:y],
\end{equation}
where $t^\prime:=\theta_1-(n-m) \frac{\theta _2}{n}$.
Since $n$ and $m$ are coprime numbers, we have that
$n$ and $n-m$ are also coprime numbers. Then there exists (at most two) an $n$-th root of unity 
$e^{2\pi \ii k /n}$ such that
\begin{equation}\label{ec:dio}
\left(e^{\frac{2\pi \ii k}{n}}\right) ^{-(n-m)} e^{\ii t^\prime}= e^{\ii t},\quad 
\textrm{ where }\quad |t|\leq \frac{\pi}{n}.
\end{equation}
By~\eqref{eq:sigmaact} and~\eqref{ec:dio} we obtain
\begin{equation}\label{eq:1xy}
e^{\frac{2 \pi \ii k}{n}}\cdot [1:xe^{\ii t^\prime}:y]=[1:xe^{\ii t}:y].
\end{equation}
The choice $s=-\frac{2\pi k}{n}-\sigma \mod 2\pi$, where $0 \leq s \leq 2\pi$ with the 
help~\eqref{eq:sigmaact} and~\eqref{eq:1xy}
we have that
\begin{equation}
\begin{split}
e^{-\ii s}\cdot [1:b:c]&=(e^{\frac{2 \pi \ii k}{n}} e^{\ii\sigma})\cdot [1:b:c]= e^{\frac{2 \pi \ii k}{n}} \cdot \left( e^{\ii \sigma} \cdot [1:b:c]\right)\\
&=e^{\frac{2 \pi \ii k}{n}} \cdot 
[1:xe^{\ii t^\prime}:y]=[1:xe^{\ii t}:y].
\end{split}
\end{equation}
The preceding equality with the help 
of~\eqref{eq:action} yields
\[
[1:b:c]=e^{\ii s} \cdot [1:xe^{\ii t}:y]=[1:xe^{\ii t} e^{-(n-m)s}:
ye^{-\ii ns}]\quad \textrm{ with }\quad  
|t|\leq \frac{\pi}{n}.
\]
By the definition of $\Delta_0$ and $\Gamma_0$ given in~\eqref{eq:defgammadelta} we note that they are disjoint. Moreover,  
by Remark~\ref{rem:1} we have that 
$\Pi_n(\Omega_n)=
\Delta_0\cup \Gamma_0$.

We point out that for $[1:x:y] \in \Gamma_0$ there is nothing to prove.
In the sequel, we assume that $[1:x:y] \in \Delta_0$. 
For short, we write $\omega=\omega(x,y)$.
We claim that 
\begin{equation}
|t|<\frac{(2\omega -n+1)\pi}{n}.
\end{equation}
Indeed, by contradiction argument assume that 
\begin{equation}\label{eq:contra}
\frac{(2\omega -n+1)\pi}{n}\leq|t|\leq \frac{\pi}{n}.
\end{equation}
Recall that $[1:b:c]\in \Omega_n$ and hence 
by~\eqref{eq:igualdad} we also have $[1:x:y]\in \Omega_n$.
Now, we apply Theorem~\ref{th:bohl} for the corresponding  trinomials $f$ and $g$ associated to 
$[1:x:y]$ and $[1:b:c]=[1: x e^{\ii t} e^{-\ii(n-m)s}: ye^{-\ii ns}]$, respectively. 
Since $[1,x,y]\in \Delta_0$, Remark~\ref{rem:2} yields  that the positive numbers $1$, $x$ and $y$ are the side lengths of some triangle (it may be degenerate).
The corresponding pivots~\eqref{def:PW} are given by
\begin{equation}\label{eq:pivotales}
P_f:=\frac{n-m}{2},\quad \quad
P_g:=\frac{n(t+\pi)-m\pi}{2\pi}=P_f+\frac{nt}{2\pi}
\end{equation}
and
\[
\omega_f(1)=
\omega_g(1)=\frac{n\omega_1+m\omega_2}{2\pi},
\]
where $\omega_1$ and $\omega_2$ are the opposite angles to the side lengths $1$ and $x$ of a triangle with side lengths $1$, $x$
and $y$.
Recall that $|t|\leq \frac{\pi}{n}$.
By~\eqref{eq:contra} and~\eqref{eq:pivotales}
\[
\frac{2\omega-(n-1)}{2}\leq |P_f-P_g|=\frac{n|t|}{2\pi}\leq \frac{1}{2}.
\] 
The preceding inequality with the help of Item~(2) and Item~(3) of Lemma~\ref{formareducida} yields that the trinomial $g\not \in \Omega_n$, which is a contradiction.
This finishes the proof of the necessity implication.

We continue with the proof of the sufficient implication.
Assume that  
\[[1:xe^{\ii t}e^{-\ii(n-m)s}: ye^{-\ii n s}]
\] satisfies (1) and (2) of Theorem~\ref{th:main}.
We show that $[1:xe^{\ii t}e^{-\ii(n-m)s}: ye^{-\ii n s}]\in \Omega_n$.

We recall that $\Delta_0$ and $\Gamma_0$ are disjoint and
$\Pi_n(\Omega_n)=
\Delta_0\cup \Gamma_0$.
For $[1:x:y] \in \Gamma_0$,~\eqref{eq:bolhsegunda} in Theorem~\ref{th:bohl}
implies that $[1:x:y]\in \Omega _n$. 

In the sequel, we assume that $[1:x:y] \in \Delta_0$.
We apply Theorem~\ref{th:bohl} for the corresponding  trinomials $f$ and $g$ associated to 
$[1:x:y]$ and $[1:b:c]=[1: x e^{\ii t} e^{-\ii(n-m)s}: ye^{-\ii ns}]$, respectively. 
By~\eqref{eq:pivotales} we obtain $|P_f-P_g|=\frac{n|t|}{2\pi}$ and for $|t|<\frac{\pi (2 \omega-n+1)}{n}$ we have that
\[
|P_f-P_g|<
\frac{2\omega-(n-1)}{2}.
\]
The preceding inequality with the help of Item~(1) of Lemma~\ref{formareducida} implies that the trinomial $g \in \Omega_n$.
The proof of the necessity implication is complete.
\end{proof}

\appendix
\section{\textbf{Parity argument and projection to the real case}}\label{ap:tools}
This section contains useful properties that help us to make this manuscript more fluid. 
\begin{lemma}[Parity argument for special pivots]\label{parity}
\hfill

\noindent
For the open interval
\[
I=(P-\omega, P+\omega)
\] 
with $P\in \mathbb{R}$ and $\omega > 0$  the following statements holds true.
\begin{enumerate}
\item[(i)] If $P\in \mathbb{Z}$, then 
$\#I$ is an odd positive integer.

\item[(ii)] If  
$P\in \{\ell/2 :\ell
\textrm{ is an odd number}\}$, then
$\#I$ is an even non-negative integer.
\end{enumerate}
\end{lemma}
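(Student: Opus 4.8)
The plan is to exploit the reflection symmetry of $I$ about its center. I would introduce the involution $\rho\colon \mathbb{R}\to \mathbb{R}$ given by $\rho(x):=2P-x$. Since $I=(P-\omega,P+\omega)$ is an open interval centered at $P$, the map $\rho$ sends $I$ bijectively onto itself, and its unique fixed point is $x=P$. The whole argument then reduces to two bookkeeping questions: does $\rho$ preserve the set of integers, and is the fixed point $P$ itself an integer? Once $\rho$ restricts to an involution of $I\cap \mathbb{Z}$, the count $\#I$ equals the number of fixed integers plus twice the number of two-element orbits $\{k,\rho(k)\}$, and the parity of $\#I$ is forced entirely by the number of fixed integers.

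For Item~(i), suppose $P\in \mathbb{Z}$. Then $2P$ is an integer, so $\rho(k)=2P-k\in \mathbb{Z}$ for every $k\in \mathbb{Z}$, and hence $\rho$ restricts to an involution of $I\cap \mathbb{Z}$. Its only possible fixed point is $x=P$, which lies in $I$ because $\omega>0$ and is an integer by hypothesis. Thus $I\cap \mathbb{Z}$ consists of the single fixed point $P$ together with a family of two-element orbits $\{k,\rho(k)\}$, giving $\#I=1+2N$ for some integer $N\geq 0$; in particular $\#I$ is an odd positive integer. For Item~(ii), suppose $P=\ell/2$ with $\ell$ odd. Then $2P=\ell$ is again an integer, so $\rho$ still restricts to an involution of $I\cap \mathbb{Z}$; but now its unique fixed point $P$ is a genuine half-integer, hence not an integer, so $\rho$ has no fixed point on $I\cap \mathbb{Z}$. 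Consequently $I\cap \mathbb{Z}$ is partitioned entirely into two-element orbits, whence $\#I=2N$ for some integer $N\geq 0$, an even non-negative integer (with the value $0$ occurring precisely when $\omega$ is small enough that $I$ contains no integer).

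I expect no serious obstacle here. The only points that need a careful word are that $\rho$ maps the \emph{open} interval onto itself (immediate from centering at $P$), so no boundary effect disturbs the pairing, and that $P$ is indeed the unique fixed point of $\rho$, so that the fixed-integer count is exactly $1$ in case~(i) and exactly $0$ in case~(ii). As an alternative I could translate by $\lfloor P\rfloor$ to normalize the center to $P=0$ and $P=1/2$ and then count integers in $(-\omega,\omega)$ and $(1/2-\omega,1/2+\omega)$ directly, but the involution argument has the advantage of treating both items uniformly and making the parity statement transparent.
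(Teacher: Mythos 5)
Your argument is correct and is essentially the paper's own proof: the paper's key observation that $P+k\in(P,P+\omega)$ if and only if $P-k\in(P-\omega,P)$ is exactly the reflection $\rho(x)=2P-x$ you use, with the parity then decided by whether the center $P$ is itself an integer. Your phrasing via an involution and its fixed points is just a more explicit packaging of the same pairing.
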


\begin{proof}
We start with the proof of Item~(i). We observe that $P+k\in (P,P+\omega)$ for some $k\in \mathbb{N}$ if and only if $P-k\in (P-\omega,P)$ for some $k\in \mathbb{N}$.
The previous observation with the help of the hypothesis $P\in \mathbb{Z}$ implies the statement. 
The proof of Item~(ii) is analogous.
\end{proof}

\begin{remark}
In the following lemma, for convenience in the proof we introduce the following redundant inequality $2\omega \leq n$ 
in the definition of $\Delta _j$, $j=0,1$, as one can see along its proof.
\end{remark} 

\begin{lemma}[Projection to the real case]\label{lem:piezas}
\hfill

\noindent
Let $\Pi_n: \Omega_n \to \mathbb{P}^2 _{\mathbb{R}}$ be defined by
\begin{equation}\label{def:pi2} 
\Pi_n([1:b:c])=[1:|b|:(-1)^n|c|],
\end{equation}
where $\Omega_n$ is given in~\eqref{def:omega}.
Then it follows that 
\begin{equation}\label{eq:pnomegan}
\Pi_n(\Omega_n)=
\begin{cases}
\Delta_0\cup \Gamma_0 & \textrm{ for $n$ being an even number},\\
\Delta_1\cup \Gamma_1 & \textrm{ for $n$ being an odd number},
\end{cases}
\end{equation}
where 
\begin{equation}
\begin{split}
\Gamma_{0}&=\{[1:x:y] \in \mathbb{P}_{\mathbb{R}}^2 \, \,  
| \, \,  0 \leq x,\, \,  0 \leq y, \, \,  x+y<1\},\\
\Delta_{0}&= \{[1:x:y] \in \mathbb{P}_{\mathbb{R}}^2 \, \,  | \, \,  0 < x,\, \,  0 < y, \, \, 1 \leq x+y  , \, \, n-1<2\omega \leq n \},\\
\Gamma _{1}&=\{[1:x:y] \in \mathbb{P}_{\mathbb{R}}^2 \, \,  
| \, \,  0 \leq x,\, \,  y \leq 0, \, \,  x-y<1\}, \\
\Delta_{1}&=\{[1:x:y] \in \mathbb{P}_{\mathbb{R}}^2 \, \,  | \, \,  0 < x,\, \,  y < 0, \, \, 1 \leq x-y , \, \, n-1<2\omega \leq n\}, 
\end{split}
\end{equation} 
and
\[
\omega:= \frac{n\, \arccos \left(\frac{x^2+y^2-1}{2x|y|}\right)+m\, \arccos \left( \frac{1-x^2+y^2}{2|y|}\right)}{2\pi}.\]
\end{lemma}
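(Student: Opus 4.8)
The plan is to evaluate Bohl's Theorem (Theorem~\ref{th:bohl}) at radius $r=1$ for the real trinomial attached to a projected point, and to read off exactly when all $n$ of its roots lie in $D_1$; I treat $n$ even (so $m$ is odd, since $\gcd(n,m)=1$) and $n$ odd in parallel. The reduction is as follows. If $[1:b:c]\in\Omega_n$, then Proposition~7.9 in~\cite{BarreraMaganaNavarrete} gives $\Pi_n([1:b:c])=[1:x:y]\in\Omega_n\cap\mathbb{P}^2_\mathbb{R}$ with $x=|b|>0$ and $y=(-1)^n|c|$, so $y>0$ for $n$ even and $y<0$ for $n$ odd. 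Hence the statement reduces to the equivalence that the real trinomial $\zeta^n+x\zeta^m+y$, with $x>0$ and $y$ of sign $(-1)^n$, is Schur stable if and only if $[1:x:y]$ lies in $\Gamma_0\cup\Delta_0$ (for $n$ even) or $\Gamma_1\cup\Delta_1$ (for $n$ odd). The inclusion $\subseteq$ in~\eqref{eq:pnomegan} is the forward direction of this equivalence, while $\supseteq$ holds because any such real Schur stable point equals its own image under $\Pi_n$ and therefore lies in $\Pi_n(\Omega_n)$.

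At $r=1$ the three Bohl side lengths are $1$, $x$, and $|y|$. First I dispose of the non-triangle regime through~\eqref{eq:bolhsegunda}: of the three alternatives only $1>x+|y|$, which yields $n$ roots, is Schur stable, whereas $|y|>1+x$ yields $0$ roots and $x>1+|y|$ yields $m<n$ roots. Since $x+|y|$ equals $x+y$ for $n$ even and $x-y$ for $n$ odd, the Schur stable non-triangle points are exactly the open triangles $\Gamma_0$ and $\Gamma_1$.

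In the triangle regime $x+|y|\geq 1$, the number of roots in $D_1$ equals $\#I$, the number of integers in $I=(P-\omega,P+\omega)$, where by~\eqref{eq:Pcasos} the pivot is $P=\tfrac{n-m}{2}\in\mathbb{Z}+\tfrac12$ for $n$ even and $P=0\in\mathbb{Z}$ for $n$ odd, and $2\omega$ is the quantity displayed in the statement. Two structural facts enter. First, the geometric bound $n\omega_1+m\omega_2\leq n\pi$, with equality only in the degenerate configuration $\omega_1=\pi$ (that is, $x+|y|=1$), which is exactly why the redundant inequality $2\omega\leq n$ always holds. Second, since $P$ is a half-integer for even $n$ and an integer for odd $n$, Lemma~\ref{parity} forces $\#I$ to share the parity of $n$, so the target value $\#I=n$ is parity-compatible.

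The heart of the argument is the integer count in this symmetric interval. Counting directly, or equivalently via Lemma~\ref{basicfactsonintervals1} and Lemma~\ref{basicfactsonintervals2}, shows that $\#I=n$ if and only if $n-1<2\omega\leq n+1$; combined with $2\omega\leq n$ this collapses to $n-1<2\omega\leq n$, the defining inequality of $\Delta_0$ and $\Delta_1$. The boundary endpoints are the delicate part, and I expect them to be the main obstacle: when $x+|y|=1$ one has $2\omega=n$ and the upper endpoint of $I$ is an integer that Bohl's open-interval count correctly excludes, so $\#I=n$ persists --- this is precisely why the hypotenuse $x+|y|=1$ is assigned to $\Delta_j$ and not to $\Gamma_j$. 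Moreover, when $x=1+|y|$ the degenerate triangle forces $\omega_2=\pi$, hence $2\omega=m\leq n-1$, so Bohl's exceptional ``$m$ roots'' clause falls outside $\Delta_j$ and causes no conflict. Matching these open/closed conditions to the strict-versus-nonstrict boundary behaviour in Bohl's Theorem is the only genuinely careful bookkeeping; the degenerate boundary of $\Gamma_j$ with $x=0$ or $y=0$ corresponds to the binomials of~\eqref{eq:degenerados} and is included by the same convention. Assembling the two regimes gives the equality~\eqref{eq:pnomegan}.
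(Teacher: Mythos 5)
Your proposal follows essentially the same route as the paper's proof: reduce to the real point $[1:|b|:(-1)^n|c|]$ via Proposition~7.9 of \cite{BarreraMaganaNavarrete}, note that $\Pi_n$ is the identity on its image so that the statement becomes a classification of the Schur stable real points in the relevant quadrant, dispose of the non-triangle regime with~\eqref{eq:bolhsegunda}, and in the triangle regime count the integers in $(P-\omega,P+\omega)$ using the parity of the pivot. One sub-argument genuinely differs: you obtain the upper bound $2\omega\le n$ from the elementary geometric inequality $n\omega_1+m\omega_2\le n(\omega_1+\omega_2)\le n\pi$, with equality only when $\omega_1=\pi$, whereas the paper derives it by contradiction from Item~(iii) of Lemma~\ref{basicfactsonintervals2} and Item~(3) of Lemma~\ref{formareducida} (otherwise a pivot perturbation would produce a degree-$n$ trinomial with $n+1$ roots). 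Your version is more direct and equally valid.

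There is, however, a bookkeeping error in exactly the place you flagged as delicate, the hypotenuse $x+|y|=1$. There $2\omega=n$, but the endpoints $P\pm n/2$ are \emph{half-integers}, not integers: for $n$ even one has $P=\tfrac{n-m}{2}\in\mathbb{Z}+\tfrac12$ (as $m$ is odd) and $\omega=\tfrac n2\in\mathbb{Z}$, while for $n$ odd one has $P=0$ and $\omega=\tfrac n2\in\mathbb{Z}+\tfrac12$. Your claim that the upper endpoint is an integer which the open interval excludes is in fact inconsistent with your conclusion $\#I=n$: an open interval of length $n$ with integer endpoints contains only $n-1$ integers, so your stated justification would expel the hypotenuse from $\Omega_n$ and hence from $\Delta_j$. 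The correct reason the hypotenuse belongs to $\Delta_j$ is the opposite one: since the endpoints are half-integers, the open interval of length $n$ captures all $n$ integers between them. The final classification you assert is right, so this is a repairable slip rather than a structural flaw, but as written the argument for the boundary case does not hold together.
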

\begin{proof}
We show the  case when $n$ is an even number. The proof for the case when $n$ is an odd number is analogous and we omit it. 

We note that  $\Pi_n([1:b:c])=[1:|b|:|c|]$. Proposition~7.9 in~\cite{BarreraMaganaNavarrete} yields
\[
\Pi_n(\Omega _n)\subset \{ [1:x:y] \in \mathbb{P} ^2 _{\mathbb{R}} : 0\leq x,\, 0\leq y \} \cap \Omega _n.
\]
Moreover, since $\Pi_n$ restricted to the set $\{ [1:x:y] \in \mathbb{P} ^2 _{\mathbb{R}} : 0\leq x,\, 0\leq y \} \cap \Omega _n$ is the identity map, we have that
\[\Pi_n(\Omega _n)= \{ [1:x:y] \in \mathbb{P} ^2 _{\mathbb{R}} : 0\leq x,\, 0\leq y \} \cap \Omega _n.
\]
Now, we show 
$[1:x:y]\in \Pi_n(\Omega_n)$ and $1$, $x$ and $y$ are the sides lengths of some triangle (it may be degenerate)
if and only if $[1:x:y]\in \Delta_0$.

Let $[1:x:y]\in \Pi_n(\Omega_n)$
and assume that there exists a triangle with side lengths $1,x,y$ then
$$0<x,\quad 0<y\quad \textrm{ and }\quad 1\leq x+y.$$
Theorem~\ref{th:bohl} with the help of the Law of Cosines implies that $\# I=n$, where $I=(P-\omega, P+\omega)$, $P=\frac{n-m}{2}$ and 
$$\omega=\frac{n \arccos \left(\frac{x^2+y^2-1}{2xy}\right)+m \arccos \left( \frac{1-x^2+y^2}{2y}\right)}{2\pi}.$$
By Lemma~\ref{basicfactsonintervals1} we have that  $n-1< 2\omega<n+1$. 
We claim that  $n-1< 2\omega \leq n$. Otherwise, Theorem~\ref{th:bohl}, 
Lemma~\ref{basicfactsonintervals2} Item~(iii) and Lemma~\ref{formareducida} Item~(3) imply the existence of a trinomial equation of degree $n$
associated to $[1:xe^{\ii t}:y]$ with $ \frac{\pi}{n}(n+1-2\omega)<|t|\leq \frac{\pi}{n}$
having $n+1$ roots, which is an absurd. 
Therefore, $[1:x:y] \in \Delta _0$.

Conversely, assume that $[1:x:y] \in \Delta _0$. By Remark~\ref{rem:2}  there exists a triangle with side lengths equal to $1$, $x$ and $y$. Hence, Theorem~\ref{th:bohl}, Item~(ii) of Lemma~\ref{parity} in Appendix~\ref{ap:tools}, and Item~(i) and 
Item~(ii) of
Lemma~\ref{basicfactsonintervals2}  imply that 
\[
[1:x:y] \in \Omega _n \cap \{ [1:x:y] \in \mathbb{P} ^2 _{\mathbb{R}} : 0\leq x, 0\leq y \}.
\]

Finally, applying the third inequality 
of~\eqref{eq:bolhsegunda} in  Theorem~\ref{th:bohl} we have that 
$[1:x:y] \in \Pi_n(\Omega _n)$ and hence we deduce that there is no triangle with length sides $1$, $x$ and $y$ if and only if $[1:x:y] \in \Gamma_0$.
The proof is complete.
\end{proof}

\subsection{\textbf{Proof of Corollary~\ref{cor:stablereal}}}\label{sub:proofcorstable}
\hfill

\noindent
For $(n,m)$, with $n$ being an even positive integer.
Since $\gcd(n,m)=1$, we have that $m$ is an odd integer. By~\eqref{def:pi} we have that $x>0$ and $y>0$.
We observe the following
\begin{itemize}
\item[(1)] for any $[1:x:y]\in \Delta_0\cup \Gamma_0$ the choice $t=0$ and $s=\pi$ yields
\[[1:-x:y]\in 
\Omega_n \cap \mathbb{P}^2_{\mathbb{R}}.
\]
\item[(2)] for any $[1:x:y]\in \Gamma_0$ the choice $t=\pi/n$ and $s=k\pi/n$ with
$k$ being an odd integer on $0<k<2n$  such that 
$e^{\ii\pi/n}=e^{\ii(n-m)k\pi/n}$
 implies
\[[1:x:-y]\in 
\Omega_n \cap \mathbb{P}^2_{\mathbb{R}}.
\]
Such $k$ exists due to $\gcd(n-m,2n)=1$.
\item[(3)] By Item~(2) we know that $[1:x:-y]\in \Omega_n \cap \mathbb{P}^2_{\mathbb{R}}$ whenever $[1:x:y]\in \Gamma_0$.
We note that the choice $t=0$ and $s=\pi$ gives
\[[1:-x:-y]=[1:xe^{\ii t}e^{-\ii (n-m)s}:(-y)e^{-\ii ns}]
\in \Omega_n \cap \mathbb{P}^2_{\mathbb{R}}
\]
when $[1:x:y]\in \Gamma_0$.
\item[(4)] 
We now note that if $[1:x:y]\in \Delta_0$ then $[1:x:-y]\not\in \Omega_n \cap \mathbb{P}^2_{\mathbb{R}}$. 
Indeed, since $[1:x:y]\in \Delta_0$ we know that there exists a triangle (it may be degenerate) with side lengths $1$, $x=|x|>0$ and $y=|y|>0$.
Recall that $n$ is an even number. 
By~\eqref{eq:Pcasos} we have that corresponding pivot $P$ for the point $[1:x:-y]$ is an integer number and then Item~(i) in Lemma~\ref{parity} in Appendix~\ref{ap:tools} yields that any non-empty open interval centred at $P$ contains an odd number of integers. Hence, $[1:x:-y]\not\in \Omega_n \cap \mathbb{P}^2_{\mathbb{R}}$.
Similarly, we deduce that  $[1:-x:-y]\not\in \Omega_n \cap \mathbb{P}^2_{\mathbb{R}}$ whenever  
$[1:x:y]\in \Delta_0$.
\end{itemize}

Now, we consider $n$ be an odd positive integer. In the sequel,  we assume that $m$ is an even number and recall that 
$\gcd(n,m)=1$. Then we have that $n-m$ is an odd integer.
By~\eqref{def:pi} we have that $x>0$ and $y<0$.
We observe the following:
\begin{itemize}
\item[(i)] For any $[1:x:y]\in \Delta_1\cup \Gamma_1$ the choice $t=0$ and $s=\pi$ yields
\[[1:-x:-y]\in 
\Omega_n \cap \mathbb{P}^2_{\mathbb{R}}.
\]
\item[(ii)] For any $[1:x:y]\in \Gamma_1$ the choice $t=\pi/n$ and $s=k\pi/n$ with
$k$ being an odd integer on $0<k<2n$  such that 
$e^{\ii\pi/n}=
e^{\ii (n-m)k\pi/n}$
 implies
\[[1:x:-y]\in 
\Omega_n \cap \mathbb{P}^2_{\mathbb{R}}.
\]
Such $k$ exists due to $\gcd(n-m,2n)=1$.
\item[(iii)] By Item~(ii) we know that $[1:x:-y]\in \Omega_n \cap \mathbb{P}^2_{\mathbb{R}}$ whenever $[1:x:y]\in \Gamma_1$.
We note that the choice $t=0$ and $s=\pi$ gives
\[[1:-x:y]=[1:xe^{\ii t}e^{-\ii (n-m)s}:(-y)e^{-\ii ns}]
\in \Omega_n \cap \mathbb{P}^2_{\mathbb{R}}.
\]
\item[(iv)] 
We now note that if $[1:x:y]\in \Delta_1$ then $[1:x:-y]\not\in \Omega_n \cap \mathbb{P}^2_{\mathbb{R}}$. 
Indeed, since $[1:x:y]\in \Delta_1$ we know that there exists a triangle (it may be degenerate) with side lengths $1$, $x=|x|>0$ and $-y=|y|>0$.
Recall that $n$ is an odd integer, $m$ is an even integer and $(n-m)/2$ is not integer.
By~\eqref{eq:Pcasos} we have that corresponding pivot $P$ for the point $[1:x:-y]$ is $(n-m)/2$ and then Item~(ii) in Lemma~\ref{parity} in Appendix~\ref{ap:tools} yields that any non-empty open interval centred at $P$ contains an even number of integers. Hence, $[1:x:-y]\not\in \Omega_n \cap \mathbb{P}^2_{\mathbb{R}}$.
Similarly, we deduce that  $[1:-x:y]\not\in \Omega_n \cap \mathbb{P}^2_{\mathbb{R}}$ whenever  
$[1:x:y]\in \Delta_1$.
\end{itemize}

Finally, we consider $n$ be an odd positive integer. In the sequel,  we assume that $m$ is an odd number and recall that 
$\gcd(n,m)=1$. Then we have that $n-m$ is an even integer.
By~\eqref{def:pi} we have that $x>0$ and $y<0$.
We observe the following:
\begin{itemize}
\item[(a)] For any $[1:x:y]\in \Delta_1\cup \Gamma_1$ the choice $t=0$ and $s=\pi$ yields
\[[1:x:-y]\in 
\Omega_n \cap \mathbb{P}^2_{\mathbb{R}}.
\]
\item[(b)] By the choice  $t=0$ and $s=\pi$ we note   that $[1:-x:y]\in \Omega_n \cap \mathbb{P}^2_{\mathbb{R}}$ if and only if $[1:-x:-y]\in \Omega_n \cap \mathbb{P}^2_{\mathbb{R}}$.
\item[(c)] For any $[1:x:y]\in \Gamma_1$ the choice $t=\pi/n$ and $s=k\pi/n$ with
$k$ being an integer on $0<k<n$  such that 
$-e^{\ii\pi/n}=e^{\ii (n+1)\pi/n}=e^{\ii (n-m)k\pi/ n}$
 implies
\[[1:-x:(-1)^k y]\in 
\Omega_n \cap \mathbb{P}^2_{\mathbb{R}}.
\]
Such $k$ exists due to $\gcd(n-m,n)=1$. By Item~(b) we conclude that
\[[1:-x:\pm y]\in 
\Omega_n \cap \mathbb{P}^2_{\mathbb{R}}.
\]
\item[(d)] 
We now note that if $[1:x:y]\in \Delta_1$ then $[1:-x:y]\not\in \Omega_n \cap \mathbb{P}^2_{\mathbb{R}}$. 
Indeed, since $[1:x:y]\in \Delta_1$ we know that there exists a triangle (it may be degenerate) with side lengths $1$, $x=|x|>0$ and $-y=|y|>0$.
Recall that $n$ and $m$ are odd integers and $n-m$ is an even integer.  
By~\eqref{eq:Pcasos} we have that corresponding pivot $P$ for the point $[1:-x:y]$ is $n/2$ and then Item~(ii) in Lemma~\ref{parity} in Appendix~\ref{ap:tools} yields that any non-empty open interval centred at $P$ contains an even number of integers. Hence, $[1:-x:y]\not\in \Omega_n \cap \mathbb{P}^2_{\mathbb{R}}$.
Similarly, we deduce that  $[1:-x:-y]\not\in \Omega_n \cap \mathbb{P}^2_{\mathbb{R}}$ whenever  
$[1:x:y]\in \Delta_1$.
\end{itemize}

\section{\textbf{Bohl's Theorem}}
\label{ap:Bohlstatement}

In this section, 
we stress the equivalence, under the assumption that $n$ and $m$ are coprime numbers, of 
Theorem~\ref{th:bohl} with the original statement of Bohl's Theorem~\cite{Bohl}, which for completeness of the presentation we state it below with its original notation.
\begin{theorem}[Bohl's Theorem \cite{Bohl}]\label{th:Bohloriginal}
\hfill

\noindent
The number of roots of \eqref{def:f}, 
that have modulus strictly smaller than the positive number $r$, are obtained by taking the $\tau$-multiple of a number $\zeta$, where $\tau = \textsf{gcd}(n,m)$ and $\zeta$ can be determined as follows:
\begin{enumerate}
\item[I.] If any of the quantities $|a|r^n$, $|b|r^m$, $|c|$ is strictly smaller than the sum of the other two, then they constitute a triangle, such that the sides are proportional to the preceding quantities.
Let 
$\omega_1$ and $\omega_2$ denote the two angles which are opposite to $|a|r^n$ and $|b|r^m$, respectively. Then $\zeta$ is given by the number of integers, which lie between
\begin{equation}
\label{Pmenosomega}
\frac{n(\beta-\gamma+\pi)-m(\alpha-\gamma+\pi)}{2\tau\pi}-\frac{n\omega_1+m\omega_2}{2\tau\pi}
\end{equation}
and
\begin{equation}\label{Pmasomega}
\frac{n(\beta-\gamma+\pi)-m(\alpha-\gamma+\pi)}{2\tau\pi}+\frac{n\omega_1+m\omega_2}{2\tau\pi},
\end{equation}
where $\alpha$, $\beta$, $\gamma$ are the arguments of $a$, $b$, $c$, respectively.
\item[II.] One of the three quantities $|a|r^n$, $|b|r^m$ and $|c|$ is greater or equal to the sum of the other two. If we exclude the exceptional cases (a) and (b) mentioned below, we have that\\
\begin{itemize}
\item[II.1] if $|c| \geq |a|r^n+|b|r^m$, then $\zeta=0$,
\item[II.2] if $|b|r^m \geq |a|r^n+|c|$, then $\zeta=\frac{m}{\tau}$,
\item[II.3] if $|a|r^n \geq |b|r^m+|c|$, then $\zeta=\frac{n}{\tau}$.
\end{itemize}
The following exceptional cases occur:\\
\begin{itemize}
\item[(a)] If $|b|r^m=|a|r^n+|c|$ and $r^{n-m}\leq \frac{m |b|}{n|a|}$ hold true
and additionally
\begin{equation}\label{eq:entero}
\frac{1}{\tau}\left( \frac{n(\beta -\gamma)-m(\alpha -\gamma)}{\pi}+n\right)
\quad \textrm{ is an even integer},
\end{equation}
then  $\zeta=\frac{m}{\tau}-1$.
\item[(b)] If $|a|r^n=|b|r^m+|c|$ holds true and additionally
\begin{equation}\label{eq:enterodos}
\frac{1}{\tau}\left( \frac{n(\beta -\gamma)-m(\alpha -\gamma)}{\pi}-m\right)
\quad \textrm{ is an even integer},
\end{equation}
then $\zeta=\frac{n}{\tau}-1$.
\end{itemize}
\end{enumerate}
\end{theorem}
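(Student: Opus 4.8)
The plan is to verify the equivalence of Theorem~\ref{th:Bohloriginal} with Theorem~\ref{th:bohl} under the standing hypothesis $\textsf{gcd}(n,m)=1$, i.e. $\tau=1$, by matching Bohl's conclusions clause by clause against the interval-counting of Theorem~\ref{th:bohl}. With $\tau=1$ the $\tau$-multiple is trivial, so $\zeta$ is itself the number of roots in $D_r$, and the two bounds~\eqref{Pmenosomega}--\eqref{Pmasomega} collapse exactly to $P-\omega(r)$ and $P+\omega(r)$ of~\eqref{def:PW}. Two of the regimes are then immediate. When each of $|a|r^n,|b|r^m,|c|$ is strictly smaller than the sum of the other two one has $\omega_1,\omega_2\in(0,\pi)$, and Bohl's Case~I reads literally as ``the number of integers strictly between $P-\omega(r)$ and $P+\omega(r)$,'' which is the generic clause of Theorem~\ref{th:bohl}; when one quantity strictly exceeds the sum of the other two, Items~II.1--II.3 reproduce the three lines of~\eqref{eq:bolhsegunda} verbatim. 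All the genuine content lies in the three degenerate boundaries, which Bohl places inside Case~II together with the exceptional cases (a) and (b).

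First I would record, via the Law of Cosines as in Remark~\ref{rem:2}, how the angles collapse at each boundary: $|c|=|a|r^n+|b|r^m$ forces $\omega_1=\omega_2=0$, hence $\omega(r)=0$ and an empty interval; $|a|r^n=|b|r^m+|c|$ forces $\omega_1=\pi,\omega_2=0$, hence $\omega(r)=n/2$ and an interval of length $n$; and $|b|r^m=|a|r^n+|c|$ forces $\omega_1=0,\omega_2=\pi$, hence $\omega(r)=m/2$ and an interval of length $m$. For an open interval of integer length $k$ the parity lemmas (Lemma~\ref{parity}, Lemma~\ref{basicfactsonintervals1}) give $k$ interior integers when the boundary is non-integral and $k-1$ when it is integral, so everything reduces to deciding when $P\pm\omega(r)\in\mathbb{Z}$. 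A direct computation from~\eqref{def:PW} yields
\[
P+\tfrac{m}{2}=\tfrac12\Bigl(\tfrac{n(\beta-\gamma)-m(\alpha-\gamma)}{\pi}+n\Bigr),\qquad
P+\tfrac{n}{2}=n+\tfrac12\Bigl(\tfrac{n(\beta-\gamma)-m(\alpha-\gamma)}{\pi}-m\Bigr),
\]
so that, with $\tau=1$, the integrality conditions~\eqref{eq:entero} and~\eqref{eq:enterodos} are exactly the assertions ``$P+\omega(r)\in\mathbb{Z}$'' in cases (a) and (b) respectively (the two endpoints being integral together, as they differ by the integer $2\omega(r)$).

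Granting this, the boundary $|c|=|a|r^n+|b|r^m$ gives an empty interval and matches $\zeta=0$ of II.1, while case (b) is immediate: when~\eqref{eq:enterodos} holds the length-$n$ interval has integral endpoints and $n-1$ interior integers, matching $\zeta=n-1$, and when it fails there are $n$ interior integers, matching the default $\zeta=n$ of II.3. Thus Theorem~\ref{th:bohl} needs \emph{no} special clause here, and I would explain this structurally: writing $h(r)=|a|r^n-|b|r^m-|c|$ one has $h(0)<0$, $h(\infty)=+\infty$ and, since $h'(r)=r^{m-1}(n|a|r^{n-m}-m|b|)$, a single sign change, so $|a|r^n=|b|r^m+|c|$ holds at a \emph{unique} radius, necessarily beyond the critical radius $r_*:=(m|b|/(n|a|))^{1/(n-m)}$ at which $f'(\zeta)=\zeta^{m-1}(na\zeta^{n-m}+mb)$ vanishes on $\{|\zeta|=r\}$; at that radius the root of largest modulus sits on the circle, so the strict count is unambiguously $n-1$.

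The main obstacle is the genuine asymmetry in case (a), which is precisely the content of the extra clause of Theorem~\ref{th:bohl}. Here $g(r)=|b|r^m-|a|r^n-|c|$ attains its maximum exactly at $r_*$, so the equality $|b|r^m=|a|r^n+|c|$ can hold at \emph{two} radii $r_1<r_*<r_2$, and the sign of $r^{n-m}-m|b|/(n|a|)$ records on which side of $r_*$ one sits. When~\eqref{eq:entero} holds the length-$m$ interval has integral endpoints, giving the naive count $m-1$; Theorem~\ref{th:bohl} overrides this to $m$ precisely when $r^{n-m}>m|b|/(n|a|)$, while Bohl's case (a) keeps $m-1$ in the complementary regime $r^{n-m}\le m|b|/(n|a|)$. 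These two prescriptions are complementary in the $r^{n-m}$ condition, so I would check they agree at each radius, and I would make the asymmetry transparent through the root count: at $r_1\le r_*$ only $m-1$ roots have modulus below $r_1$ (the boundary root having modulus exactly $r_1$), matching Bohl's $m-1$, whereas at $r_2>r_*$ the root that lay on the circle at $r_1$ now has modulus strictly below $r_2$, so $m$ roots are strictly inside, matching the override. The delicate point is to make this root-tracking rigorous, namely to exhibit the boundary root at the aligning phase singled out by~\eqref{eq:entero} and to confirm, by monotonicity of the count in $r$ across the II.2 region $(r_1,r_2)$, that exactly one additional root is absorbed before $r_2$; once this is in place every line of Theorem~\ref{th:Bohloriginal} has been matched to a clause of Theorem~\ref{th:bohl}, completing the equivalence.
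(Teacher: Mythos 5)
Your proposal matches the paper's Appendix~\ref{ap:Bohlstatement} essentially step for step: with $\tau=1$ the bounds \eqref{Pmenosomega}--\eqref{Pmasomega} collapse to $P\pm\omega(r)$, the Law of Cosines gives $\omega(r)\in\{0,m/2,n/2\}$ on the three degenerate boundaries, the conditions \eqref{eq:entero} and \eqref{eq:enterodos} are identified with integrality of the interval endpoints, and the two roots $r_1\le r_2$ of $|b|x^m=|a|x^n+|c|$ straddling $(m|b|/(n|a|))^{1/(n-m)}$ (versus the unique root in case (b)) resolve the asymmetry of exception (a) against the override clause of Theorem~\ref{th:bohl}. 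The only divergence is the root-tracking program you flag as delicate at the end, which is unnecessary for what is actually being established: the paper takes the counts themselves from \cite{Bohl} and only verifies that the two formulations prescribe the same numbers, which your clause-by-clause matching already accomplishes.
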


In the sequel, we stress that the counting procedure provided in Theorem~\ref{th:bohl} has already taken in account the exceptions~(a) and~(b) given in Theorem~\ref{th:Bohloriginal}.

We start noticing that since we assume $\tau=\textsf{gcd}(n,m)=1$, the numbers 
$P-\omega(r)$ and $P+\omega(r)$ defined in~\eqref{def:PW} are equal to~\eqref{Pmenosomega} and~\eqref{Pmasomega}, respectively.

Now, we discuss the exceptional case~(a).
By Descartes' rule of signs, there are at most two distinct positive real roots of the equation 
$|b|x^m=|a|x^n+|c|$. We denote by $r_1$ and $r_2$ such roots and without loss of generality we assume that $0<r_1\leq r_2$.
Moreover, $r_1^{n-m} \leq \frac{m |b|}{n|a|}\leq r_2 ^{n-m}$, see~\cite{Bohl}, pp.~560--561.

Suppose that $r_1^{n-m} \leq \frac{m |b|}{n|a|}$. Since $|b|r^m_1=|a|r^n_1+|c|$,
there is a degenerate triangle with side lengths $|a|r_1^n$, $|b|r_1^m$, $|c|$ and the
angles opposite  to $|a|r_1^n$ and $|b|r_1^m$ are given by
$\omega _1= 0$ and $\omega _2=\pi$, respectively. Hence,
$\omega(r_1)=\frac{m}{2}$ and the length of the interval $(P-\omega(r_1), P+\omega(r_1))$ is equal to $2\omega(r_1)=m$. By~\eqref{eq:entero}, we have that
\[
P+\omega(r_1)=\frac{1}{2}\left( \frac{n(\beta -\gamma)-m(\alpha -\gamma)}{\pi}+n\right)\quad \textrm{is an integer},
\]
which together with $2\omega(r_1)=m$ implies that
$P-\omega(r_1)$ is also an integer. Therefore, the number of integers contained in the interval $(P-\omega(r_1), P+\omega(r_1))$ is equal to
$m-1$. As a consequence, when $r_1^{n-m} \leq \frac{m |b|}{n|a|}$, the counting procedure given in Theorem~\ref{th:bohl} agrees with the conclusion of Item~(a).

Now, we remark that when $\frac{m |b|}{n|a|}<r_2 ^{n-m}$ and $P+\omega(r_2)$ is an integer, 
Theorem~\ref{th:bohl} and Theorem~\ref{th:Bohloriginal} state that $\zeta=m$.
 
In the sequel, we analyze the exceptional case~(b).
Analogously to the case~(a), there is a unique positive root of the equation $|a|x^n=|b|x^m+|c|$, and we denote such root by $r_0$. Hence, there is a degenerate triangle with side lengths $|a|r_0^n$, $|b|r_0^m$, $|c|$ and the
angles opposite  to $|a|r_0^n$ and $|b|r_0^m$ are given by
$\omega _1= \pi$ and $\omega _2=0$, respectively. Hence,
$\omega(r_0)=\frac{n}{2}$ and the length of the interval $(P-\omega(r_0), P+\omega(r_0))$ is equal to $2\omega(r_0)=n$. By \eqref{eq:enterodos} we have
\[
P-\omega(r_0)=\frac{1}{2}\left( \frac{n(\beta -\gamma)-m(\alpha -\gamma)}{\pi}-m\right)\quad
\textrm{is an integer},
\] 
which yields that $P+\omega(r_0)$ is an integer. Therefore, the number of
integers contained in the interval $(P-\omega(r_0), P+\omega(r_0))$ is equal to
$n-1$.
In summary, the statement of Item~(b) of Theorem~\ref{th:Bohloriginal} agrees with the counting given in Theorem~\ref{th:bohl}.

\section*{\textbf{Declarations}}
\noindent
\textbf{Acknowledgments.} 
G. Barrera would like to express his gratitude to University of Helsinki, Department of Mathematics and
Statistics, for all the facilities used along the realization of this work. 
He thanks the Faculty of Mathematics, UADY, Mexico, for the hospitality during the research visit in 2022, where partial work on this paper was undertaken. He would also like to thank the Instituto de Matem\'atica Pura e Aplicada (IMPA), Brazil, for support and hospitality during the 2023 Post-Doctoral Summer Program, where partial work on this paper was undertaken.
All authors are grateful for the invitation to the Banff BIRS-CMO online scientific activity ``Real Polynomials: Counting and Stability'' (2021), which motivates this paper.
The authors would like to thank prof. Jonas M. T\"olle (Department of Mathematics and Systems Analysis, Aalto University, Espoo, Finland) for his support on the translation of~\cite{Bohl}. The authors are grateful to the reviewer for the thorough examination of the paper, which has lead to a significant improvement.
Figure~\ref{triangulo} and Figure~\ref{pindeomegan} were created using the open source software GeoGebra.

\noindent
\textbf{Ethical approval.} Not applicable.

\noindent
\textbf{Competing interests.} The authors declare that they have no conflict of interest.

\noindent
\textbf{Authors' contributions.}
All authors have contributed equally to the paper.

\noindent
\textbf{Funding.} 
The research of G. Barrera has been supported by the Academy of Finland, via the Matter and Materials Profi4 University Profiling Action, an Academy project (project No. 339228) and the Finnish Centre of Excellence in Randomness and STructures (project No. 346306). 
The research of W. Barrera and J. P. Navarrete has been supported by the CONACYT, ``Proyecto Ciencia de Frontera'' 2019--21100 via Faculty of Mathematics, UADY, M\'exico.

\noindent
\textbf{Availability of data and materials.} Not applicable.

\bibliographystyle{amsplain}

\end{document}